\theoremstyle{plain}
\newtheorem{thm}{Theorem}[section]
\newtheorem{lem}{Lemma}[section]
\newtheorem{cnd}{Condition}[section]
\theoremstyle{definition} 
\newcommand{\infint}{\int_{-\infty}^{\infty}}
\newcommand{\Log}{\operatorname{Log}}
\def\ex{{\rm E\,}}
\begin{document}
\date{\today}
\title{Nonparametric estimation of the characteristic triplet of a discretely observed L\'evy process}
\author{Shota Gugushvili\\
{\normalsize Eurandom}\\
{\normalsize Technische Universiteit Eindhoven}\\
{\normalsize P.O. Box 513}\\
{\normalsize 5600 MB Eindhoven}\\
{\normalsize The Netherlands}\\
{\normalsize gugushvili@eurandom.tue.nl}}
\maketitle
\begin{abstract}
Given a discrete time sample $X_1,\ldots X_n$ from a L\'evy
process $X=(X_t)_{t\geq 0}$ of a finite jump activity, we study
the problem of nonparametric estimation of the characteristic
triplet $(\gamma,\sigma^2,\rho)$ corresponding to the process $X.$
Based on Fourier inversion and kernel smoothing, we propose
estimators of $\gamma,\sigma^2$ and $\rho$ and study their
asymptotic behaviour. The obtained results include derivation of
upper bounds on the mean square error of the estimators of
$\gamma$ and $\sigma^2$ and an upper bound on the mean integrated
square error of an estimator of $\rho.$
\medskip\\
{\sl Keywords:} Characteristic triplet; Fourier inversion; kernel smoothing; L\'evy density; L\'evy process; mean integrated square error; mean square error.\\
{\sl AMS subject classification:} 62G07, 62G20\\
\end{abstract}
\newpage

\section{Introduction}

L\'evy processes are stochastic processes with stationary
independent increments. The class of such processes is extremely
rich, the best known representatives being Poisson and compound
Poisson processes, Brownian motion, Cauchy process and, more
generally, stable processes. Though the basic properties of L\'evy
processes have been well-studied and understood since a long time,
see e.g.\ \cite{skorohod}, during the last years there has been a
renaissance of interest in L\'evy processes. This revival of
interest is mainly due to the fact that L\'evy processes found
numerous applications in practice and proved to be useful in a
broad range of fields, including finance, insurance, queueing,
telecommunications, quantum theory, extreme value theory and many
others, see e.g.\ \cite{barndorff3} for an overview. \cite{tankov}
provides a thorough treatment of applications of L\'evy processes
in finance. Comprehensive modern texts on fundamentals of L\'evy
processes are \cite{bertoin,kyprianou,sato}, and we refer to those
for precise definitions and more details concerning properties of
L\'evy processes.

Already from the outset an intimate relation of L\'evy processes
with infinitely divisible distributions was discovered. For a
detailed exposition of infinitely divisible distributions see
e.g.\ \cite{steutel}. In fact there is a one-to-one correspondence
between L\'evy processes and infinitely divisible distributions:
if $X=(X_t)_{t\geq 0}$ is a L\'evy process, then its marginal
distributions are all infinitely divisible and are determined by
the distribution of $X_1.$ Conversely, given an infinitely
divisible distribution $\mu,$ one can construct a L\'evy process,
such that $P_{X_1}=\mu.$ The celebrated L\'evy-Khintchine formula
for infinitely divisible distributions provides us with an
expression for the characteristic function of $X_1,$ which can be
written as
\begin{equation}
\label{levykhintchineformula} \phi_{X_1}(z)=\operatorname{exp}\left[i\gamma
z-\frac{1}{2}\sigma^2z^2+\int_{\mathbb{R}}(e^{izx}-1-izx 1_{[-1,1]}(x))\nu(dx)\right],
\end{equation}
where $\gamma\in\mathbb{R},\sigma\geq 0$ and $\nu$ is a measure
concentrated on $\mathbb{R}\backslash\{0\},$ such that
$\int_{\mathbb{R}}(1\wedge x^2)\nu(dx)<\infty.$ This measure is
called the L\'evy measure corresponding to the L\'evy process $X,$
while the triple $(\gamma,\sigma^2,\nu)$ is referred to as the
characteristic or L\'evy triplet of $X.$ The representation in
\eqref{levykhintchineformula} in terms of the triplet
$(\gamma,\sigma^2,\nu)$ is unique. Thus the L\'evy triplet
provides us with means for unique characterisation of a law of any
L\'evy process. Bearing this in mind, the statistical inference
for L\'evy processes can be reduced to inference on the
characteristic triplet. There are several ways to approach
estimation problems for L\'evy processes: parametric,
nonparametric and semiparametric approaches. These approaches
depend on whether one decides to parametrise the L\'evy measure
(or its density, in case it exists) with a Euclidean parameter, or
to work in a nonparametric setting. A semiparametric approach to
parametrisation of the L\'evy measure is also possible. Most of
the existing literature dealing with estimation problems for
L\'evy processes is concerned with parametric estimation of the
L\'evy measure (or its density, in case it exists), see e.g.\
\cite{akritas2,akritas1}, where a fairly general
setting is considered. There are relatively few papers that study
nonparametric inference procedures for L\'evy processes, and the
majority of them assume that high frequency data are available,
i.e.\ either a L\'evy process is observed continuously over a time
interval $[0,T]$ with $T\rightarrow\infty,$ or it is observed at
equidistant time points $\Delta_n,\ldots,n\Delta_n$ and
$\lim_{n\rightarrow \infty}\Delta_n=0,$
$\lim_{n\rightarrow\infty}n\Delta_n=\infty,$ see e.g.\
\cite{basawa3,figueroa,rubin}. On the other hand it is equally
interesting to study estimation problems for the case when the
high frequency data are not available, i.e.\ when
$\Delta_n=\Delta$ is kept fixed. The latter case is more involved
due to the fact that the information on the L\'evy measure is
contained in jumps of the process $X$ and impossibility to observe
them directly as in the case of a continuous record of
observations, or to `disentangle' them from the Brownian motion as
in the high frequency data setting, makes the estimation problem
rather difficult. In the particular context of a compound Poisson
process we mention \cite{bu,bugr,gug}, where given a sample
$Y_1,\ldots,Y_n$ from a compound Poisson process $Y=(Y_t)_{t\geq
0},$ nonparametric estimators of the jump size distribution
function $F$ (see \cite{bu,bugr}) and its density $f$ (see
\cite{gug}) were proposed and their asymptotics were studied as
$n\rightarrow \infty.$ This problem is referred to as
decompounding. Nonparametric estimation of the L\'evy measure
$\nu$ based on low frequency observations from a general L\'evy
process $X$ was studied in \cite{reiss,watteel}.
However, these papers treat the case of estimation of the L\'evy
measure only (or of the canonical function $K$ in case of
\cite{watteel}) and not of its density. Moreover, the
rates of convergence of the proposed estimators are studied under the strong
moment condition $\ex[|X_1|^{4+\delta}]<\infty,$ where $\delta$ is
some strictly positive number. This condition automatically
excludes distributions with heavy tails. Nonparametric estimation
of the L\'evy density of a pure jump L\'evy process (i.e. a L\'evy
process without a drift and a Brownian component) was considered
in \cite{comte}. We refer to those papers for additional details.

In the present work we concentrate on nonparametric inference for
L\'evy processes that are of finite jump activity and have
absolutely continuous L\'evy measures. In essence this means that
we consider a superposition of a compound Poisson process and an
independent Brownian motion. The L\'evy-Khintchine formula in our
case takes the form
\begin{equation}
\label{levykhintchineformula2}
\phi_{X_1}(z)=\operatorname{exp}\left[i\gamma z
-\frac{1}{2}\sigma^2z^2+\int_{\mathbb{R}}(e^{izx}-1)\rho(x)dx\right],
\end{equation}
where the L\'evy density $\rho$ is such that $\lambda:=\infint
\rho(x)dx<\infty.$ To keep the notation compact, we again use
$\gamma$ to denote the drift coefficient in
\eqref{levykhintchineformula2}, even though it is in general
different from $\gamma$ in \eqref{levykhintchineformula}. Observe
that the process $X$ is related to Merton's jump-diffusion model
of an asset price, see \cite{merton}. Additional details on
exponential L\'evy models, of which Merton's model is a particular
case, can be found e.g.\ in \cite{tankov}.

Suppose that we dispose a sample
$X_{\Delta},X_{2\Delta},\ldots,X_{n\Delta}$ from the process $X.$
By a rescaling argument, without loss of generality, we may take
$\Delta=1.$ Based on this sample, our goal is to infer the
characteristic triplet $(\gamma,\sigma^2,\rho),$ corresponding to
\eqref{levykhintchineformula2}. At this point we mention that a
problem related to ours was studied in \cite{belomestny}. There an
exponential of the process $X$ (this exponential models evolution
of an asset price over time) was considered and inference was
drawn on parameters $\sigma,\lambda$ and $\gamma$ and and the
functional parameter, the L\'evy density $\rho,$ based on
observations on prices of vanilla options on this asset. The
difference of our estimation problem with this problem is the
observation scheme, since we observe directly the process $X.$
Moreover, existence of an exponential moment of $X$ was assumed in
\cite{belomestny} (this is unavoidable in the financial setting,
because otherwise one cannot price financial derivatives).

Our estimators of $\gamma,\lambda$ and $\sigma^2$ will be based on
\eqref{levykhintchineformula2} and the use of a plug-in device. To
estimate $\rho,$ we will use methods developed in nonparametric
density estimation based on i.i.d.\ observations, in particular we
will employ the Fourier inversion approach and kernel smoothing,
see e.g.\ Sections 6.3 and 10.1 in \cite{wasserman} for an
overview. In fact by the stationary independent increments
property of a L\'evy process, see Definition 1.6 in \cite{sato},
the problem of estimating $(\gamma,\sigma^2,\rho)$ from a discrete
time sample $X_1,\ldots,X_n$ from the process $X$ is equivalent to
the following one (to keep the notation compact, we again use
$X$'s to denote our observations): let $X_1,\ldots, X_n$ be
i.i.d.\ copies of a random variable $X$ with characteristic
function given by \eqref{levykhintchineformula2} (in the sequel we
will use $X$ to denote a generic observation). Based on these
observations, the problem is to construct estimators of
$\gamma,\sigma^2$ and $\rho.$ We henceforth will concentrate on
this equivalent problem.

The rest of the paper is organised as follows: in Section
\ref{sigmalambda} we construct consistent estimators of parameters
$\sigma^2,\lambda$ and $\gamma.$ In Section
\ref{deconvnoise-results}, using the estimators of
$\sigma^2,\lambda$ and $\gamma,$ we propose a plug-in type
estimator for $\rho$ and study the behaviour of its mean
integrated square error. In Section \ref{lowbound} we derive a lower bound for estimation of $\rho.$ All the proofs are collected in Section
\ref{deconvnoise-proofs}.

\section{Estimation of $\sigma,\lambda$ and $\gamma$}
\label{sigmalambda}

In the sequel we will find it convenient to use the jump size
density $f(x):=\rho(x)/\lambda.$ We first formulate conditions on
$\rho,\sigma$ and $\gamma,$ that will be used throughout the paper.

\begin{cnd}
\label{conditionf} Let the unknown density $\rho$ belong to the class
\begin{align*}
W(\beta,L,\Lambda,K)=\Bigl\{ &\rho:\rho(x)=\lambda
f(x),f\text{ is a density},\infint x^2f(x)dx\leq K,\\
&\infint |t|^{\beta}|\phi_f(t)|dt\leq L,
\lambda\in(0,\Lambda]\Bigr\},
\end{align*}
where $\beta,L,\Lambda$ and $K$ are strictly positive numbers.
\end{cnd}
This condition implies in particular that the Fourier transform
$\phi_{\rho}(t)=\lambda\phi_f(t)$ of $\rho$ is integrable. The
latter is natural in light of the fact that our estimation
procedure for $\rho$ will be based on Fourier inversion, see
Section \ref{deconvnoise-results}. The integrability of
$\phi_{\rho}$ implies that $\rho$ is bounded and continuous. It
follows that $f$ is bounded and continuous, and hence, being a
probability density, it is also square integrable. Therefore
$\rho(x)=\lambda f(x)$ is square integrable as well. This again is
a natural assumption, because we will select the mean integrated
square error as a performance criterion for our estimator of
$\rho.$ The condition $\lambda>0$ ensures that the process $X$ has
a compound Poisson component. Restriction of the class of
densities $f$ to those densities that have the finite second
moment is needed to ensure that $\ex[X^2]$ is bounded from above
uniformly in $\rho,\gamma$ and $\sigma.$ The latter is a technical
condition used in the proofs.

\begin{cnd}
\label{conditionsigma} Let $\sigma$ be such that
$\sigma\in(0,\Sigma],$ where $\Sigma$ is a strictly
positive number.
\end{cnd}
This is not a restrictive assumption in many applications, since
for instance in the financial context $\sigma,$ which models
volatility, typically belongs to some bounded set, e.g.\ a compact
$[0,\Sigma]$ as in \cite{belomestny}. The condition $\sigma>0$ in
our case ensures that $X$ has a Brownian component. If $\sigma=0,$
then our problem in essence reduces to the one studied in
\cite{gug}.

\begin{cnd}
\label{conditiongamma} Let $\gamma$ be such that $|\gamma|\leq\Gamma,$
where $\Gamma$ denotes a positive number.
\end{cnd}
Remarks similar to those we made after Condition \ref{conditionsigma} apply in this case as well.

Next we turn to the construction of estimators of
$\sigma^2,\lambda$ and $\gamma.$ The ideas we use resemble those
in \cite{belomestny}. Let $\Re(z)$ and $\Im(z)$ denote the real
and the imaginary parts of a complex number $z,$ respectively.
From \eqref{levykhintchineformula2} we have
\begin{equation}
\label{decomposition} \log
\left(|\phi_X(t)|\right)=-\lambda+\lambda\Re(\phi_f(t))-\frac{\sigma^2t^2}{2}.
\end{equation}
Here we used the fact that
\begin{equation*}
\log\left( \left|e^{\lambda\phi_f(t)}\right|\right)=\log
\left(e^{\lambda\Re(\phi_f(t))}\right)+\log
\left(\left|e^{i\lambda\Im(\phi_f(t))}\right|\right)
=\lambda\Re(\phi_f(t)).
\end{equation*}
Let $v^h$ be a kernel that depends on a bandwidth $h$ and is such
that
\begin{equation*}
\int_{-1/h}^{1/h}v^h(t)dt=0, \quad \int_{-1/h}^{1/h}\left(-\frac{t^2}{2}\right)v^h(t)dt=1.
\end{equation*}
Observe that unlike kernels in kernel density estimation, see e.g.\ Definition 1.3 in \cite{tsyb}, the
function $v^{h}$ does not integrate to one and by calling it a
kernel we abuse the terminology. In view of \eqref{decomposition}
\begin{equation}
\label{sig1} \int_{-1/h}^{1/h}\log
(|\phi_X(t)|)v^h(t)dt=\lambda\int_{-1/h}^{1/h}\Re(\phi_f(t))v^h(t)dt+\sigma^2.
\end{equation}
Provided enough assumptions on $v^h,$ one can achieve that the
right-hand side of \eqref{sig1} tends to $\sigma^2$ as
$h\rightarrow 0.$ A natural way to construct an estimator of
$\sigma^2$ then is to replace in \eqref{sig1} $\log (|\phi_X(t)|)$
by its estimator $\log (|\phi_{emp}(t)|).$ Consequently, we
propose
\begin{equation}
\label{sig2}
\tilde{\sigma}_n^2=\int_{-1/h}^{1/h}\max\{\min\{M_n,\log
(|\phi_{emp}(t)|)\},-M_n\}v^h(t)dt
\end{equation}
as an estimator of $\sigma^2.$ Here $M_n$ denotes a sequence of
positive numbers diverging to infinity at a suitable rate. The
truncation in \eqref{sig2} is introduced due to technical reasons
in order to obtain a consistent estimator.

We now state our assumptions on the kernel $v_h,$ the bandwidth
$h$ and the sequence $M=(M_n)_{n\geq 1}.$

\begin{cnd}
\label{conditionv}
Let the kernel $v^h(t)=h^3v(ht),$ where the function $v$ is continuous and real-valued, has a support on $[-1,1]$ and is such that
\begin{equation*}
\int_{-1}^1v(t)dt=0, \quad \int_{-1}^1\left(-\frac{t^2}{2}\right)v(t)dt=1, \quad v(t)=O(t^{\beta}) \text{ as } t\rightarrow 0.
\end{equation*}
Here $\beta$ is the same as in Condition \ref{conditionf}.
\end{cnd}
\begin{cnd}
\label{conditionh} Let the bandwidth $h$ depend on $n$ and be
such that $h_n=(\eta\log n)^{-1/2}$ with $0<\eta<\Sigma^{-2}.$
\end{cnd}
Using a default convention in kernel density estimation, we will
suppress the index $n$ when writing $h_n,$ since no ambiguity will
arise. Condition \ref{conditionh} implies that
$ne^{-\Sigma^2/h^2}\rightarrow\infty,$ since the logarithm of the
left-hand side of this expression diverges to minus infinity.
Condition \ref{conditionh} is required to establish consistency of
estimators of $\sigma^2,\lambda,\gamma$ and $\rho.$ Hence it is of
the asymptotic nature. For finite samples of moderate size,
however, it might lead to unsatisfactory estimates. A separate
simulation study in the spirit of \cite{delaigle} is needed to
study possible bandwidth selection methods in practical problems.
\begin{cnd}
\label{conditionm} Let the truncating sequence $M=(M_n)_{n\geq 1}$
be such that $M_n=m_n h^{-2},$ where $m_n$ is a sequence of real
numbers diverging to plus infinity at a slower rate than $\log n,$
for instance $m_n=\log\log n.$
\end{cnd}
Other restrictions on $M$ are also possible.

In the sequel we will frequently employ the symbol $\lesssim$ and
$\gtrsim,$ meaning `less or equal up to a universal constant', or
`greater or equal up to a universal constant', respectively. The
following theorem establishes consistency of $\tilde{\sigma}_n^2.$
\begin{thm}
\label{thm-sigmatilde} Let Conditions \ref{conditionf}--\ref{conditionm} be satisfied and let the
estimator $\tilde{\sigma}_n^2$ be defined by \eqref{sig2}. Then
\begin{equation*}
\sup_{|\gamma|\leq \Gamma}\sup_{\sigma\in(0,\Sigma]}\sup_{\rho\in
W(\beta,L,\Lambda,K)}\ex[(\tilde{\sigma}_n^2-\sigma^2)^2]\lesssim(\log
n)^{-\beta-3}.
\end{equation*}
\end{thm}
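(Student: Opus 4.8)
The plan is to split the error around the deterministic proxy obtained by replacing $\log(|\phi_{emp}(t)|)$ in \eqref{sig2} by the true $\log(|\phi_X(t)|)$. Concretely I set
\[
\sigma_\star^2:=\int_{-1/h}^{1/h}\log(|\phi_X(t)|)\,v^h(t)\,dt,
\]
which by \eqref{sig1} equals $\sigma^2+\lambda\int_{-1/h}^{1/h}\Re(\phi_f(t))v^h(t)\,dt$; note that the truncation in \eqref{sig2} does not alter this proxy since, by \eqref{decomposition} together with $|\Re(\phi_f)|\le1$ and $\sigma\le\Sigma$, one has $|\log(|\phi_X(t)|)|\le 2\Lambda+\Sigma^2/(2h^2)<M_n$ on $[-1/h,1/h]$ for all large $n$ (because $m_n\to\infty$). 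Writing $\tilde\sigma_n^2-\sigma^2=(\tilde\sigma_n^2-\sigma_\star^2)+(\sigma_\star^2-\sigma^2)$ and using $(a+b)^2\le 2a^2+2b^2$, it then suffices to bound the squared bias $(\sigma_\star^2-\sigma^2)^2$ and the stochastic error $\ex[(\tilde\sigma_n^2-\sigma_\star^2)^2]$ separately and uniformly over the three parameter classes.

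For the bias I would insert $v^h(t)=h^3v(ht)$ and change variables to write $\sigma_\star^2-\sigma^2=\lambda h^2\int_{-1}^1\Re(\phi_f(u/h))v(u)\,du$. Bounding $|v(u)|\lesssim|u|^\beta$ from Condition \ref{conditionv}, substituting $s=u/h$, and invoking $\infint|s|^\beta|\phi_f(s)|\,ds\le L$ and $\lambda\le\Lambda$ from Condition \ref{conditionf} gives $|\sigma_\star^2-\sigma^2|\lesssim h^{3+\beta}$. Since $h^2=(\eta\log n)^{-1}$, this yields $(\sigma_\star^2-\sigma^2)^2\lesssim h^{2(3+\beta)}=(\eta\log n)^{-(\beta+3)}\lesssim(\log n)^{-\beta-3}$, i.e.\ exactly the asserted rate; it will remain to check that the stochastic error is of smaller order.

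For the stochastic error, abbreviate $L(t)=\log(|\phi_X(t)|)$ and let $L_n(t)$ denote the truncated integrand of \eqref{sig2}. Because $\int_{-1/h}^{1/h}|v^h(t)|\,dt\lesssim h^2$, the Cauchy--Schwarz inequality (with weight $|v^h|$) gives $\ex[(\tilde\sigma_n^2-\sigma_\star^2)^2]\lesssim h^2\int_{-1/h}^{1/h}\ex[|L_n(t)-L(t)|^2]\,|v^h(t)|\,dt$, so I only need a pointwise bound on $\ex[|L_n(t)-L(t)|^2]$. I would split on the event $A_t=\{|\phi_{emp}(t)-\phi_X(t)|\le\tfrac12|\phi_X(t)|\}$. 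Since the truncation map is a contraction and leaves $L(t)$ fixed, on $A_t$ one has $|L_n(t)-L(t)|\le|\log|\phi_{emp}(t)|-L(t)|\le 2|\phi_{emp}(t)-\phi_X(t)|/|\phi_X(t)|$, whereas on $A_t^c$ the crude bound $|L_n(t)-L(t)|\le 2M_n$ applies. Combining $\ex[|\phi_{emp}(t)-\phi_X(t)|^2]\le 1/n$ with Chebyshev's inequality $\operatorname{P}(A_t^c)\le 4/(n|\phi_X(t)|^2)$ then yields $\ex[|L_n(t)-L(t)|^2]\lesssim M_n^2/(n|\phi_X(t)|^2)$.

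The main obstacle, and the place where Conditions \ref{conditionh} and \ref{conditionm} are decisive, is controlling the blow-up of $1/|\phi_X(t)|^2$ as $|t|$ grows. From \eqref{decomposition}, $\Re(\phi_f)\ge-1$ and $\sigma\le\Sigma$ give $1/|\phi_X(t)|^2\le e^{4\Lambda}e^{\Sigma^2t^2}\le e^{4\Lambda}e^{\Sigma^2/h^2}$ on $[-1/h,1/h]$. Integrating the pointwise bound and using $\int|v^h|\lesssim h^2$ and $M_n=m_nh^{-2}$ (so that $M_n^2h^4=m_n^2$) gives $\ex[(\tilde\sigma_n^2-\sigma_\star^2)^2]\lesssim m_n^2\,e^{\Sigma^2/h^2}/n$. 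Finally, $h^2=(\eta\log n)^{-1}$ with $\eta<\Sigma^{-2}$ turns this into $\lesssim m_n^2\,n^{\Sigma^2\eta-1}$ with $\Sigma^2\eta<1$, which decays polynomially in $n$ and is thus negligible compared with the bias term. All constants depend only on $\beta,L,\Lambda,K,\Sigma,\Gamma$; in particular $\gamma$ affects only the phase of $\phi_X$ and cancels in $|\phi_X|$, so every estimate is uniform over $|\gamma|\le\Gamma$, $\sigma\in(0,\Sigma]$ and $\rho\in W(\beta,L,\Lambda,K)$, and the theorem follows.
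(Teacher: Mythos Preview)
Your argument is correct. The bias computation is identical to the paper's, and your control of the stochastic error via pointwise events $A_t=\{|\phi_{emp}(t)-\phi_X(t)|\le\tfrac12|\phi_X(t)|\}$ together with Chebyshev's inequality and the contraction property of the clipping map is sound; the final bound $m_n^2 e^{\Sigma^2/h^2}/n=m_n^2 n^{\Sigma^2\eta-1}$ is indeed polynomially decaying and hence negligible.

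The paper takes a different route for the stochastic part. Instead of your pointwise events $A_t$, it works on the single global event
\[
B_n^c=\Bigl\{\sup_{|t|\le h^{-1}}|\phi_{emp}(t)-\phi_X(t)|\le\tfrac12 e^{-2\Lambda-\Sigma^2/(2h^2)}\Bigr\},
\]
on which the truncation is simultaneously inactive for all $t$, and then expands $\log|\phi_{emp}/\phi_X|$ to first order. The complementary event $B_n$ is handled via an empirical-process bound (Theorem~\ref{phideviation}) on $\ex[\sup_t|\phi_{emp}(t)-\phi_X(t)|^2]$ using bracketing/entropy arguments. Your approach is more elementary and self-contained for this particular theorem, since it replaces the empirical-process machinery by a simple second-moment and Chebyshev step applied pointwise in $t$. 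The paper's device, on the other hand, pays off later: the global event $B_n^c$ guarantees that $\phi_{emp}$ does not vanish on $[-h^{-1},h^{-1}]$, which is essential for defining the distinguished logarithm in the estimators $\tilde\gamma_n$ and $\hat\rho_n$; the bound on $\operatorname{P}(B_n)$ is therefore reused throughout the remaining proofs.
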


To construct an estimator of the jump intensity $\lambda,$ we will
again use \eqref{levykhintchineformula2}, but now in a different
way. Let $u^h$ denote a kernel that depends on $h$ and is such
that
\begin{equation*}
\int_{-1/h}^{1/h}u^h(t)dt=-1, \quad \int_{-1/h}^{1/h}{t^2}u^h(t)dt=0.
\end{equation*}
Then
\begin{equation}
\label{lam1} \int_{-1/h}^{1/h}\log
(|\phi_X(t)|)u^h(t)dt=\lambda+\lambda\int_{-1/h}^{1/h}\Re(\phi_f(t))u^h(t)dt.
\end{equation}
With a proper selection of $u^h$ one can ensure that \eqref{lam1}
converges to $\lambda$ as $h\rightarrow 0.$ Using a plug-in
device, we therefore propose the following estimator of $\lambda$:
\begin{equation*}
\tilde{\lambda}_n=\int_{-1/h}^{1/h}\max\{\min\{M_n,\log
(|\phi_{emp}(t)|)\},-M_n\}u^h(t)dt.
\end{equation*}
Now we state a condition on the kernel $u^h.$
\begin{cnd}
\label{conditionu}
Let the kernel $u^h(t)=hu(ht),$ where the function $u$ is continuous and real-valued, has a support on $[-1,1]$ and is such that
\begin{equation*}
\int_{-1}^1u(t)dt=-1, \quad \int_{-1}^1{t^2}u(t)dt=0, \quad u(t)=O(t^{\beta}) \text{ as } t\rightarrow 0.
\end{equation*}
Here $\beta$ is the same as in Condition \ref{conditionf}.
\end{cnd}
The following theorem deals with asymptotics of the estimator
$\tilde{\lambda}_n.$
\begin{thm}
\label{thm-lambdatilde} Let Conditions
\ref{conditionf}--\ref{conditiongamma} and
\ref{conditionh}--\ref{conditionu} be satisfied and let the
estimator $\tilde{\lambda}_n$ be defined by \eqref{lam1}. Then
\begin{equation*}
\sup_{|\gamma|\leq\Gamma}\sup_{\sigma\in(0,\Sigma]}\sup_{\rho\in
W(\beta,L,\Lambda,K)}\ex[(\tilde{\lambda}_n-\lambda)^2]\lesssim(\log
n)^{-\beta-1}.
\end{equation*}
\end{thm}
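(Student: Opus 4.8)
The plan is to decompose the error around the deterministic integral $I:=\int_{-1/h}^{1/h}\log(|\phi_X(t)|)u^h(t)\,dt$, which by \eqref{lam1} equals $\lambda+\lambda\int_{-1/h}^{1/h}\Re(\phi_f(t))u^h(t)\,dt$. Writing $\tilde{\lambda}_n-\lambda=(I-\lambda)+(\tilde{\lambda}_n-I)$ and using $(a+b)^2\le 2a^2+2b^2$, I would bound the squared deterministic bias $(I-\lambda)^2$ and the stochastic term $\ex[(\tilde{\lambda}_n-I)^2]$ separately. I expect the bias alone to produce the rate $(\log n)^{-\beta-1}$, while both contributions of the stochastic term are polynomially small in $n$ and hence negligible. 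The argument runs parallel to the one for $\tilde{\sigma}_n^2$ in Theorem \ref{thm-sigmatilde}, the only structural difference being that the kernel $u^h(t)=hu(ht)$ carries one power of $h$ instead of the three in $v^h$, which is exactly what shifts the exponent from $-\beta-3$ to $-\beta-1$.

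For the bias I would substitute $s=ht$ to obtain $I-\lambda=\lambda\int_{-1}^1\Re(\phi_f(s/h))u(s)\,ds$. Since $u$ is continuous on $[-1,1]$ with $u(s)=O(s^{\beta})$ near the origin, one has $|u(s)|\lesssim|s|^{\beta}$ throughout $[-1,1]$; substituting back and invoking Condition \ref{conditionf} gives $|I-\lambda|\lesssim\lambda h^{\beta+1}\int_{-1/h}^{1/h}|t|^{\beta}|\phi_f(t)|\,dt\le\Lambda L\,h^{\beta+1}$. With $h=(\eta\log n)^{-1/2}$ this yields $(I-\lambda)^2\lesssim h^{2\beta+2}\lesssim(\log n)^{-\beta-1}$, the rate-determining term.

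For the stochastic term I would introduce the good event $A_n=\{\sup_{|t|\le 1/h}|\phi_{emp}(t)-\phi_X(t)|\le\tfrac12\min_{|t|\le 1/h}|\phi_X(t)|\}$ and split $\ex[(\tilde{\lambda}_n-I)^2]$ over $A_n$ and its complement. From \eqref{decomposition}, $\log|\phi_X(t)|\ge-2\lambda-\sigma^2t^2/2\ge-2\Lambda-\Sigma^2/(2h^2)$, so $\min_{|t|\le1/h}|\phi_X(t)|\gtrsim n^{-\Sigma^2\eta/2}$ by Condition \ref{conditionh}, and since $\Sigma^2\eta/2<1/2$ this decays strictly slower than $n^{-1/2}$. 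On $A_n$ we have $|\phi_{emp}(t)|\ge\tfrac12|\phi_X(t)|$, which (because $m_n\to\infty$) exceeds $e^{-M_n}$, so both truncations are inactive and $\tilde{\lambda}_n-I=\int_{-1/h}^{1/h}(\log|\phi_{emp}(t)|-\log|\phi_X(t)|)u^h(t)\,dt$. Linearising the logarithm ($|\log(1+y)|\le 2|y|$ for $|y|\le\tfrac12$), using $\big||\phi_{emp}|-|\phi_X|\big|\le|\phi_{emp}-\phi_X|$, and applying Cauchy--Schwarz with weight $|u^h|$ together with $\ex[|\phi_{emp}(t)-\phi_X(t)|^2]\le 1/n$ yields $\ex[(\tilde{\lambda}_n-I)^2 1_{A_n}]\lesssim\frac1n\int_{-1/h}^{1/h}\frac{|u^h(t)|}{|\phi_X(t)|^2}\,dt\lesssim n^{\Sigma^2\eta-1}$, which is polynomially small and thus dominated by $(\log n)^{-\beta-1}$.

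The main obstacle is controlling $P(A_n^c)$, since off $A_n$ I only have the crude truncation bound $|\tilde{\lambda}_n-I|\lesssim M_n=m_nh^{-2}$, so I need $P(A_n^c)$ to be super-polynomially small in order to absorb the factor $M_n^2\lesssim m_n^2(\log n)^2$. Here I would establish uniform-in-$t$ concentration of the empirical characteristic function: cover $[-1/h,1/h]$ by $N$ points, control the oscillation of $t\mapsto\phi_{emp}(t)-\phi_X(t)$ between grid points by its Lipschitz constant (bounded by $\frac1n\sum_j|X_j|+\ex[|X|]$, which is $O(1)$ with probability $1-O(1/n)$ because $\ex[X^2]$ is bounded uniformly under Condition \ref{conditionf}), and apply Hoeffding's inequality at each grid point to the centered bounded variables $e^{itX_j}-\phi_X(t)$. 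As the deviation threshold is of order $n^{-\Sigma^2\eta/2}$ with $\Sigma^2\eta/2<1/2$, each tail is of order $\exp(-c\,n^{1-\Sigma^2\eta})$, and the union bound over $N\lesssim n^{\Sigma^2\eta/2}\sqrt{\log n}$ points gives $P(A_n^c)\lesssim 1/n$, so that $\ex[(\tilde{\lambda}_n-I)^2 1_{A_n^c}]\lesssim M_n^2/n$ is again negligible. The decisive ingredient throughout is Condition \ref{conditionh}, $\eta<\Sigma^{-2}$, which keeps $1/|\phi_X|$ below $n^{1/2}$ and makes the exponent $1-\Sigma^2\eta$ strictly positive; all bounds are uniform over the three supremum classes since they use only $\lambda\le\Lambda$, $\sigma\le\Sigma$, $\int|t|^{\beta}|\phi_f|\le L$, and the uniform bound on $\ex[X^2]$.
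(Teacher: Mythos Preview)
Your proof is correct and structurally matches the paper's: the bias $(I-\lambda)^2\lesssim h^{2\beta+2}\asymp(\log n)^{-\beta-1}$ is obtained in exactly the same way via $u(s)=O(|s|^{\beta})$ and Condition~\ref{conditionf}, and on the good event the linearisation of $\log|\phi_{emp}/\phi_X|$ leads to the same stochastic contribution of order $e^{\Sigma^2/h^2}/n=n^{\Sigma^2\eta-1}$, which is negligible. The paper orders the decomposition slightly differently (it splits over $B_n,B_n^c$ first and only then separates bias and fluctuation on $B_n^c$), but this is immaterial.

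The one substantive difference is how the bad event is controlled. The paper does not need the exponential concentration you set up: it simply invokes its Theorem~\ref{phideviation}, which bounds $\ex\bigl[\sup_{|t|\le 1/h}|\phi_{emp}(t)-\phi_X(t)|\bigr]^2$ via bracketing entropy and a maximal inequality for empirical processes, and then applies Chebyshev to get the merely polynomial rate $P(B_n)\lesssim e^{\Sigma^2/h^2}/(nh^2)$. Since $\Sigma^2\eta<1$, this already absorbs the factor $M_n^2\lesssim m_n^2(\log n)^2$, so nothing sharper is required. Your Hoeffding-plus-covering argument is more elementary (no empirical-process machinery) and delivers a stronger bound on $P(A_n^c)$; the paper's route, on the other hand, is packaged as a single lemma that is reused verbatim for $\tilde\sigma_n^2$, $\tilde\gamma_n$ and $\hat\rho_n$. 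Either approach closes the proof.
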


Finally, we consider estimation of the drift coefficient $\gamma.$ By
\eqref{levykhintchineformula2} we have
\begin{equation*}
\Im(\Log(\phi_X(t)))=\gamma t+\lambda\Im(\phi_f(t)),
\end{equation*}
where $\Log(\phi_X(t))$ denotes the distinguished logarithm of the
characteristic function $\phi_X(t),$ i.e.\ a logarithm that is a
single-valued and continuous function of $t,$ such that
$\Log(\phi_X(0))=0,$ see Theorem~7.6.2 in \cite{chung} for details
of its construction. Let $w^h$ denote a kernel that depends on $h$
and is such that
\begin{equation*}
\int_{-1/h}^{1/h}tw^h(t)dt=1.
\end{equation*}
Then
\begin{equation*}
\int_{-1/h}^{1/h}\Im(\Log(\phi_X(t)))w^{h}(t)dt=\gamma+\lambda\int_{-1/h}^{1/h}\Im(\phi_f(t))w^{h}(t)dt.
\end{equation*}
With an appropriate choice of $w^{h}$ the right-hand side will
converge to $\gamma.$ Therefore, by a plug-in device, for those
$\omega$'s from the underlying sample space $\Omega$ for which the
distinguished logarithm can be defined, we define an estimator of
$\gamma$ as
\begin{equation}
\label{gam1}
\tilde{\gamma}_n=\int_{-1/h}^{1/h}\max\{\min\{\Im(\Log(\phi_{emp}(t))),M_n\},-M_n\}w^{h}(t)dt,
\end{equation}
while for those $\omega$'s for which it cannot be defined, we
assign an arbitrary value to the distinguished logarithm in
\eqref{gam1}, e.g. zero. The distinguished logarithm in
\eqref{gam1} can be defined only for those $\omega $'s for which
$\phi_{emp}(t)$ as a function of $t$ does not vanish on
$[-h^{-1},h^{-1}],$ see Theorem~7.6.2 in \cite{chung}. In fact the
probability of the exceptional set, where the distinguished
logarithm is undefined, tends to zero  as $n\rightarrow\infty.$ We
will show this by finding a set $B_{n},$ such that on this set the
distinguished logarithm might be undefined, while on its
complement $B_{n}^c$ it is necessarily well-defined. We have
\begin{equation}
\label{lb} \inf_{t\in[-h^{-1},h^{-1}]}|\phi_X(t)|\geq
e^{-2\lambda-\sigma^2/(2h^2)}\geq e^{-2\Lambda-\Sigma^2/(2h^2)}.
\end{equation}
Define
\begin{equation}
\label{bndef}
\begin{split}
B_{n}&=\left\{\sup_{t\in[-h^{-1},h^{-1}]}\left|{\phi_{emp}(t)}-{\phi_{X}(t)}\right|>\delta\right\},\\
B_{n}^c&=\left\{\sup_{t\in[-h^{-1},h^{-1}]}\left|{\phi_{emp}(t)}-{\phi_{X}(t)}\right|\leq\delta\right\},
\end{split}
\end{equation}
with $\delta=(1/2)e^{-2\Lambda-\Sigma^2/(2h^2)}.$ From \eqref{lb},
\eqref{bndef} and Theorem 7.6.2 of \cite{chung} it follows that on
the set $B_{n}^c$ the distinguished logarithm is well-defined
(with $t$ restricted to $[-h^{-1},h^{-1}]$), since on this set
$\phi_{emp}$ cannot take the value zero. Notice that given our
conditions on $\rho$ and $\sigma$, our choice of $\delta$ is
suitable whatever $\rho,\sigma$ and $\gamma$ are. All we need to
show is that $\operatorname{P}(B_{n})\rightarrow 0.$ The following
theorem holds true.
\begin{thm}
\label{phideviation} Let $W(\beta,L,\Lambda,K)$ be defined as in
Condition \ref{conditionf}. Then
\begin{equation*}
\sup_{|\gamma|\leq\Gamma}\sup_{\sigma\in(0,\Sigma]}\sup_{\rho\in W(\beta,L,\Lambda,K)}\operatorname{P}(B_{n})\lesssim \frac{e^{\Sigma^2/h^2}}{nh^2}.
\end{equation*}
\end{thm}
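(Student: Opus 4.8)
The plan is to control $\operatorname{P}(B_n)$ through Markov's inequality applied to the squared uniform deviation, so that the core task reduces to bounding $\ex[\sup_{|t|\leq h^{-1}}|\phi_{emp}(t)-\phi_X(t)|^2]$. Write $Z_n(t)=\phi_{emp}(t)-\phi_X(t)=\frac{1}{n}\sum_{j=1}^n(e^{itX_j}-\phi_X(t))$, a centred average of i.i.d.\ bounded terms. First I would record the elementary pointwise variance bound: for each fixed $t$, $\ex[|Z_n(t)|^2]=\frac{1}{n}(1-|\phi_X(t)|^2)\leq\frac{1}{n}$. This controls $Z_n$ at a single frequency but not its supremum over the continuum $[-h^{-1},h^{-1}]$, and the decisive device is to pass to the derivative process.

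Since $\ex[|X|]<\infty$ under Condition \ref{conditionf}, the map $t\mapsto Z_n(t)$ is continuously differentiable with $Z_n'(t)=\frac{1}{n}\sum_{j=1}^n(iX_je^{itX_j}-\phi_X'(t))$, again a centred i.i.d.\ average, for which $\ex[|Z_n'(t)|^2]=\frac{1}{n}\var(iXe^{itX})\leq\frac{1}{n}\ex[X^2]$. The key observation is that $Z_n(0)=\phi_{emp}(0)-\phi_X(0)=1-1=0$, so the fundamental theorem of calculus gives $|Z_n(t)|\leq\int_{-1/h}^{1/h}|Z_n'(s)|\,ds$ for every $t\in[-h^{-1},h^{-1}]$. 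A single application of the Cauchy--Schwarz inequality then yields
$$\sup_{|t|\leq 1/h}|Z_n(t)|^2\leq\left(\int_{-1/h}^{1/h}|Z_n'(s)|\,ds\right)^2\leq\frac{2}{h}\int_{-1/h}^{1/h}|Z_n'(s)|^2\,ds.$$

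Taking expectations, interchanging with the integral by Fubini, and combining the derivative-variance bound with the fact that $\ex[X^2]$ is bounded above by a constant depending only on $\Gamma,\Sigma,\Lambda,K$ uniformly over the parameter set — which is exactly what Condition \ref{conditionf} and the remark following it secure — I would obtain $\ex[\sup_{|t|\leq 1/h}|Z_n(t)|^2]\leq\frac{2}{h}\cdot\frac{2}{h}\cdot\frac{1}{n}\,\ex[X^2]\lesssim\frac{1}{nh^2}$, with the implied constant free of $\gamma,\sigma,\rho$. Finally, Markov's inequality gives $\operatorname{P}(B_n)\leq\delta^{-2}\ex[\sup_{|t|\leq 1/h}|Z_n(t)|^2]$, and substituting $\delta^2=\frac{1}{4}e^{-4\Lambda-\Sigma^2/h^2}$ converts the factor $\delta^{-2}$ into $4e^{4\Lambda}e^{\Sigma^2/h^2}$, whence $\operatorname{P}(B_n)\lesssim e^{\Sigma^2/h^2}/(nh^2)$.

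The one point demanding care is the uniformity of the entire argument over $|\gamma|\leq\Gamma$, $\sigma\in(0,\Sigma]$ and $\rho\in W(\beta,L,\Lambda,K)$: this rests solely on the uniform bound for $\ex[X^2]$ and on the fact that $\delta$ was deliberately chosen — via the deterministic lower bound \eqref{lb} — to be a fixed quantity independent of the particular triplet, so the three suprema can be moved inside harmlessly. I expect the derivative step, namely justifying differentiation of $Z_n$ and pinning down the uniform second-moment bound $\ex[X^2]\lesssim 1$, to be the only genuine obstacle; once this is in place the remaining estimates are routine, and the exponential factor in the conclusion originates entirely from the exponential smallness of $\delta$.
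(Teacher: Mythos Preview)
Your argument is correct and reaches the same bound as the paper, but by a genuinely different and more elementary route. The paper also starts from Chebyshev's inequality, but then bounds $\ex[\sup_{|t|\leq h^{-1}}|\phi_{emp}(t)-\phi_X(t)|^2]$ via empirical process machinery: it splits into real and imaginary parts, uses the Lipschitz estimate $|\cos(tx)-\cos(sx)|\leq |x||t-s|$ to control bracketing numbers of the class $\{x\mapsto e^{itx}:|t|\leq h^{-1}\}$ (Theorem 2.7.11 in van der Vaart--Wellner), and then applies the maximal inequality of Theorem 2.14.1 there, obtaining $\ex[\sup|G_n v_t|^2]\lesssim \|x\|_{L_2(\mathrm{P})}^2\, J(1,\mathbb{F}_n)^2$ with entropy $J(1,\mathbb{F}_n)=O(h^{-1})$. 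Your approach exploits exactly the same Lipschitz structure, but directly: the anchoring $Z_n(0)=0$ together with the fundamental theorem of calculus and Cauchy--Schwarz replace the whole entropy argument, and the pointwise bound $\ex[|Z_n'(s)|^2]\leq n^{-1}\ex[X^2]$ plays the role of the envelope $\|x\|_{L_2(\mathrm{P})}^2$. Both arguments ultimately rest on the same uniform control of $\ex[X^2]$ over the parameter class. Your version is shorter and self-contained; the paper's version is heavier but illustrates a technique that extends to function classes where no convenient anchor point like $Z_n(0)=0$ is available.
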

Notice that by Condition \ref{conditionh} we have $\operatorname{P}(B_{n})\rightarrow 0.$ We now state a condition on the kernel $w^h.$
\begin{cnd}
\label{cndwh} Let the kernel $w^h(t)=h^2w(ht),$ where the function
$w$ is continuous and real-valued, has a support on $[-1,1]$ and
is such that
\begin{equation*}
\int_{-1}^1tw(t)dt=1, \quad w(t)=O(t^{\beta}) \text{ as }
t\rightarrow 0.
\end{equation*}
Here $\beta$ is the same as in Condition \ref{conditionf}.
\end{cnd}

The following result holds.

\begin{thm}
\label{thm-gamma} Let Conditions
\ref{conditionf}--\ref{conditiongamma},
\ref{conditionh}--\ref{conditionm} and \ref{cndwh} be
satisfied and let the estimator $\tilde{\gamma}_n$ be defined by
\eqref{gam1}. Then
\begin{equation*}
\sup_{|\gamma|\leq\Gamma}\sup_{\sigma\in(0,\Sigma]}\sup_{\rho\in
W_{sym}(\beta,L,\Lambda,K)}\ex[(\tilde{\gamma}_n-\gamma)^2]\lesssim(\log
n)^{-\beta-2},
\end{equation*}
where $W_{sym}(\beta,L,\Lambda,K)$ denotes the class of symmetric
L\'evy densities that belong to $W(\beta,L,\Lambda,K).$
\end{thm}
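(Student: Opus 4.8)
The plan is to mimic the treatment of $\tilde\sigma_n^2$ and $\tilde\lambda_n$, splitting the mean square error according to whether the distinguished logarithm of $\phi_{emp}$ can be defined, i.e.
\[
\ex[(\tilde\gamma_n-\gamma)^2]=\ex[(\tilde\gamma_n-\gamma)^2 1_{B_n^c}]+\ex[(\tilde\gamma_n-\gamma)^2 1_{B_n}],
\]
with $B_n,B_n^c$ as in \eqref{bndef}. I would dispose of the second term first. On $B_n$ the integrand in \eqref{gam1} is truncated at level $M_n$, so, after the substitution $s=ht$ in $w^h(t)=h^2w(ht)$, $|\tilde\gamma_n|\leq M_n\int_{-1/h}^{1/h}|w^h(t)|\,dt\lesssim M_n h=m_n h^{-1}$, whence $|\tilde\gamma_n-\gamma|\lesssim m_n h^{-1}$. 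Combining this with Theorem~\ref{phideviation} gives $\ex[(\tilde\gamma_n-\gamma)^2 1_{B_n}]\lesssim m_n^2 h^{-2}\operatorname{P}(B_n)\lesssim m_n^2 e^{\Sigma^2/h^2}/(nh^4)$, which by Conditions~\ref{conditionh}--\ref{conditionm} tends to zero faster than any power of $\log n$ and is negligible against the claimed rate.

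On $B_n^c$ I would use the symmetry hypothesis. For $\rho\in W_{sym}$ the density $f$ is symmetric, so $\phi_f$ is real and $\Im(\phi_f(t))\equiv 0$; hence $\Im(\Log(\phi_X(t)))=\gamma t$, and the normalisation $\int_{-1/h}^{1/h}tw^h(t)\,dt=1$ of Condition~\ref{cndwh} yields $\int_{-1/h}^{1/h}\Im(\Log(\phi_X(t)))w^h(t)\,dt=\gamma$ exactly, so the deterministic bias vanishes. (Were $f$ not symmetric, this term would instead contribute $\lambda\int\Im(\phi_f)w^h$, which Condition~\ref{cndwh} together with $\int|t|^\beta|\phi_f|\leq L$ bounds by $\Lambda L h^{\beta+2}$, a squared bias of order $(\log n)^{-\beta-2}$; this is what pins the stated exponent.) Moreover, on $B_n^c$ one has $|\Delta(t)|=O(1)$ (see below), so $|\Im(\Log(\phi_{emp}(t)))|\leq\Gamma h^{-1}+O(1)<M_n$ for $n$ large and the truncation is inactive; the error then reduces to the purely stochastic quantity
\[
\tilde\gamma_n-\gamma=\int_{-1/h}^{1/h}\Delta(t)w^h(t)\,dt,\qquad \Delta(t):=\Im(\Log(\phi_{emp}(t)))-\Im(\Log(\phi_X(t))).
\]

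The crux, and the step I expect to be the main obstacle, is the control of $\Delta(t)$ through the distinguished logarithm. On $B_n^c$ the definitions \eqref{bndef} give $|\phi_{emp}(t)-\phi_X(t)|\leq\delta\leq\tfrac12|\phi_X(t)|$ by \eqref{lb}, so $\phi_{emp}(t)/\phi_X(t)$ stays in the disk $\{|z-1|\leq 1/2\}$; since this disk avoids the negative real axis, the continuous logarithm of the ratio coincides with the principal one, and, being zero at $t=0$, equals $\Log(\phi_{emp}(t))-\Log(\phi_X(t))$. Taking imaginary parts and using $|\arg(z)|\lesssim|z-1|$ on that disk gives $|\Delta(t)|\lesssim|\phi_{emp}(t)-\phi_X(t)|/|\phi_X(t)|$, and in particular $|\Delta(t)|\lesssim\delta/|\phi_X(t)|\lesssim 1$ by \eqref{lb}, justifying the claim above. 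The delicate point is to secure the single-valuedness and continuity of the difference of logarithms \emph{globally} in $t\in[-h^{-1},h^{-1}]$ via Theorem~7.6.2 of \cite{chung}, not merely pointwise.

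It then remains to bound the second moment of $\int\Delta w^h$. By the Cauchy--Schwarz inequality with respect to the measure $|w^h(t)|\,dt$,
\[
\ex\Bigl[\Bigl(\int_{-1/h}^{1/h}\Delta(t)w^h(t)\,dt\Bigr)^2 1_{B_n^c}\Bigr]\leq\Bigl(\int_{-1/h}^{1/h}|w^h(t)|\,dt\Bigr)\int_{-1/h}^{1/h}\ex[\Delta(t)^2 1_{B_n^c}]\,|w^h(t)|\,dt.
\]
Using $\ex[|\phi_{emp}(t)-\phi_X(t)|^2]=(1-|\phi_X(t)|^2)/n\leq 1/n$, the uniform lower bound $|\phi_X(t)|^2\geq e^{-4\Lambda-\Sigma^2/h^2}$ from \eqref{lb}, and $\int_{-1/h}^{1/h}|w^h(t)|\,dt\lesssim h$, the right-hand side is $\lesssim h^2 e^{\Sigma^2/h^2}/n$, which by Condition~\ref{conditionh} decays polynomially in $n$ and is therefore dominated by $(\log n)^{-\beta-2}$. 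Collecting the three contributions, and noting that every estimate above is uniform in $\gamma$, $\sigma$ and $\rho\in W_{sym}(\beta,L,\Lambda,K)$, taking the supremum yields the asserted rate.
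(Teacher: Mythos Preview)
Your proof is correct and follows essentially the same route as the paper's: the same $B_n/B_n^c$ split, Theorem~\ref{phideviation} for the bad set, and on $B_n^c$ the control of $\Log(\phi_{emp}/\phi_X)$ via the bound $|\Log(1+z)|\lesssim|z|$ for $|z|<1/2$, followed by Cauchy--Schwarz. Two minor differences are worth noting: you observe that symmetry forces $\Im(\phi_f)\equiv 0$ so the bias term vanishes identically (the paper simply bounds it by $h^{2\beta+4}$ without using this), and your justification that truncation is inactive on $B_n^c$---via $|\Im(\Log\phi_{emp}(t))|\le|\gamma t|+|\Delta(t)|\le\Gamma h^{-1}+O(1)<M_n$---is actually cleaner than the paper's Lemma~\ref{trlemma}, whose claim ``$\arg(\phi_X(t))=0$'' is literally incorrect for $\gamma\neq 0$ (it should read $\arg(\phi_X(t))=\gamma t$, and the conclusion still follows exactly as you argue).
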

The reason why we restrict ourselves to the class of symmetric
L\'evy densities is that we would like to obtain a uniformly
consistent estimator of $\gamma$ (and eventually of $\rho,$ see
Section \ref{deconvnoise-results}). The main technical difficulty
in this respect is the (uniform) control of the argument (i.e.\ of
the imaginary part) of the distinguished logarithm in
\eqref{gam1}, see the proofs of Theorems \ref{thm-gamma} and
\ref{thm-gam2}. For transparency purposes we restrict ourselves to
the class of symmetric $\rho$'s.
If we are only interested in the consistency of the
estimator for a fixed $\rho,$ then the above restriction is not
needed and the result holds without it. We formulate the
corresponding theorem below.
\begin{thm}
\label{thm-gam2} Let Conditions \ref{conditionh}--\ref{conditionm}
and \ref{cndwh} be satisfied. Furthermore, let
$\gamma\in\mathbb{R},\sigma^2>0$ and let $\rho$ be such that
\begin{equation}
\label{rhocond} 0<\lambda<\infty; \quad \infint x^2f(x)dx<\infty;
\quad \infint |t|^{\beta}|\phi_{f}(t)|dt<\infty.
\end{equation}
Let the estimator $\tilde{\gamma}_n$ be defined by
\eqref{gam1}. Then
\begin{equation*}
\ex[(\tilde{\gamma}_n-\gamma)^2]\lesssim(\log
n)^{-\beta-2}.
\end{equation*}
\end{thm}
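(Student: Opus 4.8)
The plan is to bound the mean square error by a bias-variance-type decomposition adapted to the kernel structure, following the pattern already implicit in Theorems~\ref{thm-sigmatilde}--\ref{phideviation}. Recall that on $B_n^c$ the distinguished logarithm $\Log(\phi_{emp}(t))$ is well-defined on $[-h^{-1},h^{-1}]$, and that Theorem~\ref{phideviation} gives $\operatorname{P}(B_n)\lesssim e^{\Sigma^2/h^2}/(nh^2)\to 0$ under Condition~\ref{conditionh}. I would first split the expectation as
\begin{equation*}
\ex[(\tilde{\gamma}_n-\gamma)^2]=\ex[(\tilde{\gamma}_n-\gamma)^2 1_{B_n^c}]+\ex[(\tilde{\gamma}_n-\gamma)^2 1_{B_n}].
\end{equation*}
On $B_n$ the estimator is either the arbitrary assigned value (zero) or a truncated integral, so $|\tilde{\gamma}_n|$ is bounded by $M_n\int_{-1/h}^{1/h}|w^h(t)|dt\lesssim M_n h^{-2}\cdot h^{-1}\cdot h^2=M_n h^{-1}$ (using $w^h(t)=h^2 w(ht)$ and a change of variables), which with $|\gamma|\leq\Gamma$ makes $(\tilde{\gamma}_n-\gamma)^2\lesssim M_n^2 h^{-2}$; multiplying by $\operatorname{P}(B_n)$ and using Conditions~\ref{conditionh} and~\ref{conditionm} one checks this term is negligible compared to $(\log n)^{-\beta-2}$, since $M_n=m_n h^{-2}$ with $m_n$ of order $\log\log n$ and $e^{\Sigma^2/h^2}/n\to 0$ dominates the polynomial-in-$h$ factors.

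The substantive work is on $B_n^c$. There I would write $\tilde{\gamma}_n-\gamma=(\tilde{\gamma}_n-\gamma_h)+(\gamma_h-\gamma)$, where
\begin{equation*}
\gamma_h=\int_{-1/h}^{1/h}\Im(\Log(\phi_X(t)))w^h(t)dt=\gamma+\lambda\int_{-1/h}^{1/h}\Im(\phi_f(t))w^h(t)dt
\end{equation*}
is the deterministic target built from the true characteristic function. The deterministic bias $\gamma_h-\gamma=\lambda\int_{-1/h}^{1/h}\Im(\phi_f(t))w^h(t)dt$ I would control by changing variables $s=ht$ and inserting the decay hypothesis $\int|t|^\beta|\phi_f(t)|dt<\infty$ together with $w(t)=O(t^\beta)$ near $0$: after the substitution the integrand carries a factor $h^2\cdot w(s)\cdot\Im(\phi_f(s/h))$, and the product $|w(s)||\phi_f(s/h)|$ integrated against the available moment bound yields a bias of order $h^{\beta+2}=(\eta\log n)^{-(\beta+2)/2}$, whose square is exactly of order $(\log n)^{-\beta-2}$. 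This is where the precise exponent $-\beta-2$ is produced and must be matched.

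The stochastic term $\tilde{\gamma}_n-\gamma_h$ restricted to $B_n^c$ is the main obstacle. The difficulty is that it involves the imaginary part of the distinguished logarithm of the \emph{empirical} characteristic function, and $\Log$ is not Lipschitz near the origin of the complex plane; I must control $\Im(\Log(\phi_{emp}(t)))-\Im(\Log(\phi_X(t)))$ uniformly in $t\in[-h^{-1},h^{-1}]$. On $B_n^c$ we have $|\phi_{emp}(t)-\phi_X(t)|\leq\delta\leq(1/2)|\phi_X(t)|$ by the lower bound~\eqref{lb}, so $\phi_{emp}(t)/\phi_X(t)$ stays in a ball of radius $1/2$ about $1$ where a single-valued $\Log$ is Lipschitz; here I would use the identity $\Log(\phi_{emp}(t))-\Log(\phi_X(t))=\Log(\phi_{emp}(t)/\phi_X(t))$ valid for the distinguished logarithm along a continuous path, and bound it by a constant times $|\phi_{emp}(t)-\phi_X(t)|/|\phi_X(t)|$. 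The symmetry assumption used in Theorem~\ref{thm-gamma} is what would otherwise be needed for a \emph{uniform} version of this argument; for the present fixed-$\rho$ statement it is not required. Taking expectations, the factor $1/|\phi_X(t)|^2\lesssim e^{4\lambda+\sigma^2/h^2}$ from~\eqref{lb} combines with $\ex[\sup_t|\phi_{emp}(t)-\phi_X(t)|^2]\lesssim 1/(nh)$ (the standard empirical-characteristic-function variance bound used for Theorem~\ref{phideviation}) and the kernel factor $\int|w^h(t)|dt\lesssim h^{-1}$; after squaring the kernel contributes $h^{-2}$ and the whole stochastic term is of order $e^{\sigma^2/h^2}/(nh^{3})$ up to constants, which under Condition~\ref{conditionh} ($\eta<\Sigma^{-2}$ so $e^{\sigma^2/h^2}/n\to 0$ faster than any power of $\log n$) is negligible against the bias term. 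Assembling the three pieces gives the claimed rate $(\log n)^{-\beta-2}$, with the bias being the dominant contribution.
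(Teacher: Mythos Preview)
Your decomposition into $B_n$ and $B_n^c$, and on $B_n^c$ into bias $\gamma_h-\gamma$ plus stochastic term $\tilde{\gamma}_n-\gamma_h$, matches the paper's scheme (which simply refers back to the proof of Theorem~\ref{thm-gamma}). The bias analysis yielding $h^{\beta+2}$ and the Lipschitz treatment of $\Log(\phi_{emp}/\phi_X)$ on $B_n^c$ are both correct.

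However, you have skipped the one step the paper singles out as the \emph{only} new content in this proof relative to Theorem~\ref{thm-gamma}: verifying that on $B_n^c$, for all $n$ large enough, the truncation in the definition~\eqref{gam1} is inactive, i.e.\ that $|\Im(\Log(\phi_{emp}(t)))|\leq M_n$ on $[-h^{-1},h^{-1}]$. Without this you cannot identify $\tilde{\gamma}_n-\gamma_h$ with $\int[\Im\Log\phi_{emp}-\Im\Log\phi_X]\,w^h\,dt$. In the symmetric case this was Lemma~\ref{trlemma}; for general $\rho$ the paper argues via the Riemann--Lebesgue lemma and rectifiability of the path $t\mapsto e^{\lambda\phi_f(t)}$ (the finite second moment of $f$ makes $\phi_f$ continuously differentiable, hence the path has finite length on compacts and can wind around zero only finitely many times), concluding that $|\Im(\Log(\phi_{emp}(t)))|$ is bounded by a constant depending only on the fixed triplet, which $M_n\to\infty$ eventually exceeds.

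Your own Lipschitz argument in fact furnishes a shorter route to the same conclusion, and you should make it explicit: on $B_n^c$ the identity $\Log(\phi_{emp})=\Log(\phi_X)+\Log(\phi_{emp}/\phi_X)$ combined with $|\Log(\phi_{emp}/\phi_X)|\leq C$ and the exact form $\Im(\Log\phi_X(t))=\gamma t+\lambda\Im(\phi_f(t))$ gives $|\Im(\Log\phi_{emp}(t))|\leq |\gamma|h^{-1}+\lambda+C$, which is of order $h^{-1}$ and hence eventually below $M_n=m_n h^{-2}$. This is simpler than the paper's rectifiability argument and suffices for fixed $(\gamma,\sigma,\rho)$; insert it before your stochastic-term analysis. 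Two minor arithmetic slips, neither fatal: $\int_{-1/h}^{1/h}|w^h(t)|\,dt=h\int_{-1}^{1}|w(s)|\,ds\lesssim h$, so the bound on $B_n$ is $M_n h$ rather than $M_n h^{-1}$; and the maximal inequality behind Theorem~\ref{phideviation} gives $\ex[\sup_t|\phi_{emp}-\phi_X|^2]\lesssim (nh^2)^{-1}$, not $(nh)^{-1}$, though under Condition~\ref{conditionh} this is still negligible against the bias.
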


Now that we obtained uniformly consistent estimators of
$\sigma^2,\lambda$ and $\gamma,$ we can move to the construction of an
estimator of $\rho.$

\section{Estimation of $\rho$}
\label{deconvnoise-results}

The method that will be used to construct an estimator of $\rho$
is based on Fourier inversion and is similar to the approach in
\cite{gug}. Solving for $\phi_{\rho}$ in
\eqref{levykhintchineformula2}, we get
\begin{equation}
\label{distlog}
\phi_{\rho}(t)=\operatorname{Log}\left(\frac{\phi_X(t)}{e^{i\gamma
t }e^{-\lambda}e^{-\sigma^2t^2/2}}\right).
\end{equation}
Here $\operatorname{Log}$ again denotes the distinguished
logarithm, which can be constructed as in Theorem 7.6.2 of
\cite{chung} taking into account an obvious difference that in our
case the function $e^{\phi_{\rho}(t)}$ equals $e^{\lambda}$ at
$t=0.$

By Fourier inversion we have
\begin{equation*}
\rho(x)=\frac{1}{2\pi}\infint
e^{-itx}\operatorname{Log}\left(\frac{\phi_X(t)}{e^{i\gamma t
}e^{-\lambda}e^{-\sigma^2t^2/2}}\right)dt.
\end{equation*}
This expression will be used as the basis for construction of an
estimator of $\rho.$ Let $k$ be a symmetric kernel with Fourier
transform $\phi_k$ supported on $[-1,1]$ and nonzero there, and
let $h>0$ be a bandwidth. Since the characteristic function
$\phi_X$ is integrable, there exists a density $q$ of $X,$ and
moreover, it is continuous and bounded. This density can be
estimated by a kernel density estimator
\begin{equation*}
q_{n}(x)=\frac{1}{nh}\sum_{j=1}^n k\left(\frac{x-X_j}{h}\right),
\end{equation*}
see e.g.\ \cite{tsyb,wasserman} for an introduction to kernel
density estimation. Its characteristic function
$\phi_{emp}(t)\phi_k(ht)$ will then serve as an estimator of
$\phi_X(t).$ For those $\omega$'s from the sample space $\Omega,$
for which the distinguished logarithm in the integral below is
well-defined, $\rho$ can be estimated by the plug-in type
estimator,
\begin{equation}
\label{deconvnoise-fnh}
\rho_{n}(x)=\frac{1}{2\pi}\int_{-1/h}^{1/h}
e^{-itx}\operatorname{Log}\left(\frac{\phi_{emp}(t)\phi_k(ht)}{e^{i\tilde{\gamma}_nt}e^{-\tilde{\lambda}_n}e^{-\tilde{\sigma}^2_nt^2/2}}\right)dt,
\end{equation}
while for those $\omega$'s, for which the distinguished logarithm
cannot be defined, we can assign an arbitrary value to
$\rho_{n}(x),$ e.g.\ zero. Notice that the estimator
\eqref{deconvnoise-fnh} is real-valued, which can be seen by
changing the integration variable from $t$ into $-t.$

Our definition of the estimator is quite intuitive, however in
order to investigate its asymptotic behaviour, some modifications
are due: we need to introduce truncation in the definition of $\rho_n$ and consequently, we propose
\begin{equation}
\label{deconvnoise-hatfnh}
\begin{split}
\hat{\rho}_{n}(x)&=-i \tilde{\gamma}_n\frac{1}{2\pi}\int_{-1/h}^{1/h}e^{-itx}tdt+ \tilde{\lambda}_n\frac{1}{2\pi}\int_{-1/h}^{1/h}e^{-itx}dt+\frac{\tilde{\sigma}_n^2}{2}\frac{1}{2\pi}\int_{-1/h}^{1/h}e^{-itx}t^2dt\\
&+\frac{1}{2\pi}\int_{-1/h}^{1/h}e^{-itx} \max\{\min\{M_n,\log(|\phi_{emp}(t)\phi_k(ht)|)\},-M_n\}dt\\
&+i\frac{1}{2\pi}\int_{-1/h}^{1/h}e^{-itx}\max\{\min\{M_n,\arg(\phi_{emp}(t)\phi_k(ht))\},-M_n\}dt
\end{split}
\end{equation}
as an estimator of $\rho(x).$ Here $M=(M_n)_{n\geq1}$ denotes a
sequence of positive numbers satisfying Condition
\ref{conditionm}, while $\log$ and $\arg$ are the real and
imaginary parts of the distinguished logarithm, respectively.
Notice that in \eqref{deconvnoise-hatfnh} we essentially truncate
the real and imaginary parts of the distinguished logarithm from
above and from below. The truncation is only necessary to make
asymptotic arguments work and in practice we do not need to employ
it. Observe that $|\hat{\rho}_{n}(x)|^2$ is integrable, since by
Parseval's identity each summand in \eqref{deconvnoise-hatfnh} is
square integrable. Furthermore, by Theorem \ref{phideviation} the
probability of the set, where the distinguished logarithm in
\eqref{deconvnoise-hatfnh} can be defined, tends to one as the
sample size $n$ tends to infinity.

We now state a condition on the kernel $k$ that will be used when
studying asymptotics of $\hat{\rho}_{n}.$

\begin{cnd}
\label{conditionk} Let the kernel $k$ be the sinc kernel,
$k(x)=\sin x/(\pi x).$
\end{cnd}
The Fourier transform of the sinc kernel is given by
$\phi_k(t)=1_{[-1,1]}(t).$ The use of the sinc kernel in our
problem is equivalent to the use of the spectral cut-off method in
\cite{belomestny} in a problem similar to ours. The sinc kernel
has been used successfully in kernel density estimation since a
long time, see e.g.\ \cite{davis1,davis2}. An attractive feature
of the sinc kernel in ordinary kernel density estimation is that
it is asymptotically optimal when one selects the mean square
error or the mean integrated square error as the criterion of the
performance of an estimator. Notice that the sinc kernel is not
Lebesgue integrable, but its square is.

Now we will study the
asymptotics of $\hat{\rho}_n.$ As a criterion of performance of
the estimator $\hat{\rho}_n$ we select the mean integrated square
error
\begin{equation*}
\operatorname{MISE}[\hat{\rho}_{n}]=\ex\left[\infint
|\hat{\rho}_{n}(x)-\rho(x)|^2dx\right].
\end{equation*}
Other possible choices include, for instance, the mean square
error and the mean integrated error of the estimator. These are
not discussed here. The theorem given below constitutes the main
result of the paper. It provides an order bound on
$\operatorname{MISE}[\hat{\rho}_{n}]$ over an appropriate class of
characteristic triplets and demonstrates that the estimator
$\hat{\rho}_n$ is consistent in the $\operatorname{MISE}$ sense.
\begin{thm}
\label{thm-f} Assume that assumptions of Theorems
\ref{thm-sigmatilde}--\ref{thm-gamma} hold. Let the
estimator $\hat{\rho}_n$ be defined by \eqref{deconvnoise-hatfnh}. Then
\begin{equation*}
\sup_{|\gamma|\leq\Gamma}\sup_{\sigma\in(0,\Sigma]}\sup_{\rho\in
W_{sym}^*(\beta,L,C,\Lambda,K)}\operatorname{MISE}[\hat{\rho}_{n}]\lesssim
(\log n)^{-\beta},
\end{equation*}
where $W_{sym}^*(\beta,L,C,\Lambda,K)$ denotes the class of L\'evy
densities $\rho,$ such that $\rho\in W_{sym}(\beta,L,\Lambda,K)$
and additionally \begin{equation*} \infint
|t|^{2\beta}|\phi_f(t)|^2dt\leq C.
\end{equation*}
\end{thm}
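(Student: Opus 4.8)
The plan is to pass to the Fourier domain by Plancherel's identity. For the sinc kernel one has $\phi_k(ht)=1_{[-1/h,1/h]}(t)$, so the Fourier transform of $\hat\rho_n$ is supported on $[-1/h,1/h]$ and equals there
\[
\hat\phi_{\rho,n}(t)=-i\tilde\gamma_n t+\tilde\lambda_n+\tfrac{\tilde\sigma_n^2}{2}t^2+[\log|\phi_{emp}(t)|]_{M_n}+i[\arg\phi_{emp}(t)]_{M_n},
\]
where $[\cdot]_{M_n}$ denotes truncation to $[-M_n,M_n]$. From \eqref{distlog}, $\phi_\rho(t)=\Log(\phi_X(t))-i\gamma t+\tfrac{\sigma^2}{2}t^2+\lambda$, and since $\hat\rho_n,\rho\in L^2$ Plancherel gives
\[
\operatorname{MISE}[\hat\rho_n]=\frac{1}{2\pi}\ex\Bigl[\int_{-1/h}^{1/h}|\hat\phi_{\rho,n}(t)-\phi_\rho(t)|^2\,dt\Bigr]+\frac{1}{2\pi}\int_{|t|>1/h}|\phi_\rho(t)|^2\,dt.
\]
The second, deterministic, term produces the stated rate: since $\phi_\rho=\lambda\phi_f$, $\lambda\le\Lambda$, and $|ht|^{2\beta}\ge1$ for $|t|>1/h$,
\[
\int_{|t|>1/h}|\phi_\rho(t)|^2\,dt\le\Lambda^2 h^{2\beta}\infint|t|^{2\beta}|\phi_f(t)|^2\,dt\le\Lambda^2 C\,h^{2\beta}\lesssim(\log n)^{-\beta}
\]
uniformly over $W_{sym}^*(\beta,L,C,\Lambda,K)$, by Condition \ref{conditionh}.

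It then remains to show the first term is of smaller order. Using $|\sum_{i=1}^{5}a_i|^2\le5\sum|a_i|^2$ I split $\hat\phi_{\rho,n}-\phi_\rho$ into the three plug-in errors and the distinguished-logarithm error. The plug-in contributions are dispatched by the bounds already proved together with Condition \ref{conditionh}: the drift term contributes $\ex[(\tilde\gamma_n-\gamma)^2]\int_{-1/h}^{1/h}t^2\,dt\lesssim(\log n)^{-\beta-2}h^{-3}\asymp(\log n)^{-\beta-1/2}$ via Theorem \ref{thm-gamma}, and analogously the $\sigma^2$- and $\lambda$-terms give $\ex[(\tilde\sigma_n^2-\sigma^2)^2]h^{-5}\asymp(\log n)^{-\beta-1/2}$ and $\ex[(\tilde\lambda_n-\lambda)^2]h^{-1}\asymp(\log n)^{-\beta-1/2}$ via Theorems \ref{thm-sigmatilde} and \ref{thm-lambdatilde}. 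Each is of smaller order than $(\log n)^{-\beta}$.

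The remaining, and hardest, piece is the distinguished-logarithm error $\int_{-1/h}^{1/h}|\Log\phi_{emp}(t)-\Log\phi_X(t)|^2\,dt$, which I treat on $B_n$ and $B_n^c$ as in \eqref{bndef}. On $B_n^c$ the lower bound \eqref{lb} yields $|\phi_{emp}(t)/\phi_X(t)-1|\le1/2$, so $\Log\phi_{emp}$ is well defined (Theorem 7.6.2 of \cite{chung}), the truncations are inactive for $n$ large, and $|\Log\phi_{emp}(t)-\Log\phi_X(t)|\lesssim|\phi_{emp}(t)-\phi_X(t)|/|\phi_X(t)|$. Since $\ex[|\phi_{emp}(t)-\phi_X(t)|^2]\le1/n$ and $|\phi_X(t)|^2\ge e^{-4\Lambda-\Sigma^2/h^2}$ on $[-1/h,1/h]$, this part is at most $\tfrac1n\int_{-1/h}^{1/h}|\phi_X(t)|^{-2}\,dt\lesssim e^{\Sigma^2/h^2}/(nh)=n^{\Sigma^2\eta-1}(\log n)^{1/2}$, which is polynomially small in $n$ because $\eta<\Sigma^{-2}$. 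On $B_n$ the truncations bound the integrand by a multiple of $M_n^2+h^{-4}$, so that part is at most $\operatorname{P}(B_n)\,M_n^2 h^{-1}$, which by Theorem \ref{phideviation} is again polynomially small in $n$. Both are thus $o((\log n)^{-\beta})$.

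Collecting the pieces, the deterministic tail dominates and $\operatorname{MISE}[\hat\rho_n]\lesssim(\log n)^{-\beta}$ uniformly over the class. I expect the genuine obstacle to be the distinguished-logarithm step: one must check that $\Log\phi_{emp}$ is well defined and the truncation asymptotically inactive on $B_n^c$, and---most delicately---control the difference of arguments $\arg\phi_{emp}-\arg\phi_X$ uniformly in $\rho$; this is exactly where symmetry of $\rho$ (forcing $\arg\phi_X(t)=\gamma t$) is used, and is the reason the result is stated over the symmetric subclass $W_{sym}^*$.
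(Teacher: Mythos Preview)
Your proposal is correct and follows essentially the same route as the paper's proof: Plancherel to move to the Fourier side, the tail $\int_{|t|>1/h}|\phi_\rho(t)|^2\,dt\lesssim h^{2\beta}$ giving the rate, the three plug-in errors handled by Theorems~\ref{thm-sigmatilde}--\ref{thm-gamma} together with the factors $h^{-5},h^{-3},h^{-1}$, and the distinguished-logarithm error controlled via the $B_n/B_n^c$ split and the bound $|\Log(\phi_{emp}/\phi_X)|\lesssim|\phi_{emp}/\phi_X-1|$ on $B_n^c$. The only organisational difference is that the paper splits the whole MISE on $B_n/B_n^c$ first and then applies Parseval, whereas you apply Plancherel globally and reserve the $B_n/B_n^c$ split for the logarithm term alone; since the bounds from Theorems~\ref{thm-sigmatilde}--\ref{thm-gamma} are unconditional, your ordering is equally valid and arguably a little cleaner. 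Your remark that symmetry is needed precisely to render the truncation of $\arg\phi_{emp}$ asymptotically inactive on $B_n^c$ (via Lemma~\ref{trlemma}) is exactly the point the paper relies on.
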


The remark that we made after Theorem \ref{thm-gamma} applies in this case as well: if we are willing to
abandon the uniform convergence requirement, the similar upper
bound as in Theorem \ref{thm-f} can be established for a fixed
target density $\rho$ without an assumption that it is necessarily
symmetric. We state the corresponding theorem below.

\begin{thm}
\label{thm-f2} Assume that Conditions
\ref{conditionv}--\ref{conditionk} hold. Let $\lambda>0,\sigma>0$
and let $\rho$ satisfy \eqref{rhocond}. In addition, suppose that
\begin{equation*} \infint
|t|^{2\beta}|\phi_{f}(t)|^2dt<\infty.
\end{equation*}
Let the estimator $\hat{\rho}_n$ be defined by \eqref{deconvnoise-hatfnh}. Then
\begin{equation*}
\operatorname{MISE}[\hat{\rho}_{n}]\lesssim
(\log n)^{-\beta}.
\end{equation*}
\end{thm}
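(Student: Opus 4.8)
The plan is to pass to the Fourier domain by Parseval and to split the error into a deterministic tail (bias) piece and a stochastic piece supported on $[-1/h,1/h]$. Both $\rho$ and $\hat\rho_n$ are square integrable: \eqref{rhocond} forces $\phi_f\in L^1$, hence $f$ and $\rho=\lambda f$ are bounded and so lie in $L^2$, while the square integrability of $\hat\rho_n$ was already noted after \eqref{deconvnoise-hatfnh}. Since $\hat\rho_n$ is, by construction, the inverse Fourier transform of a function $\psi_n$ supported on $[-1/h,1/h]$, Parseval's identity gives
\[
\infint|\hat\rho_n(x)-\rho(x)|^2dx=\frac{1}{2\pi}\int_{-1/h}^{1/h}|\psi_n(t)-\phi_\rho(t)|^2dt+\frac{1}{2\pi}\int_{|t|>1/h}|\phi_\rho(t)|^2dt.
\]
The last integral is deterministic; using $\phi_\rho=\lambda\phi_f$ and the assumption $\infint|t|^{2\beta}|\phi_f(t)|^2dt<\infty$ I would bound $\int_{|t|>1/h}|\phi_f|^2\le h^{2\beta}\infint|t|^{2\beta}|\phi_f|^2\lesssim(\log n)^{-\beta}$, since $h=(\eta\log n)^{-1/2}$. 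This term realises the claimed rate and will dominate everything else.

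For the stochastic integral, note that on $[-1/h,1/h]$ one has $\phi_k(ht)=1$ for the sinc kernel, so by \eqref{distlog} and \eqref{deconvnoise-hatfnh} the difference $\psi_n-\phi_\rho$ splits as $A(t)+B(t)$, where $A(t)=-i(\tilde\gamma_n-\gamma)t+(\tilde\lambda_n-\lambda)+\tfrac12(\tilde\sigma_n^2-\sigma^2)t^2$ collects the parametric plug-in errors and $B(t)$ is the contribution of the truncated real and imaginary parts of $\Log\phi_{emp}$ minus $\Log\phi_X$; then $|\psi_n-\phi_\rho|^2\le2|A|^2+2|B|^2$. For $A$ I would use $\int_{-1/h}^{1/h}t^{2k}dt\lesssim h^{-(2k+1)}$ together with the moment bounds already established: $\ex[(\tilde\sigma_n^2-\sigma^2)^2]\lesssim(\log n)^{-\beta-3}$ (Theorem \ref{thm-sigmatilde}), $\ex[(\tilde\lambda_n-\lambda)^2]\lesssim(\log n)^{-\beta-1}$ (Theorem \ref{thm-lambdatilde}), and $\ex[(\tilde\gamma_n-\gamma)^2]\lesssim(\log n)^{-\beta-2}$ (Theorem \ref{thm-gam2}, the fixed-$\rho$ version). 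The three contributions are then, up to constants, $(\log n)^{-\beta-3+5/2}$, $(\log n)^{-\beta-1+1/2}$, and $(\log n)^{-\beta-2+3/2}$; each equals $(\log n)^{-\beta-1/2}$ and is therefore of smaller order than the bias.

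The crux is $\ex\int_{-1/h}^{1/h}|B|^2$, which I would split over the sets $B_n$ and $B_n^c$ of \eqref{bndef}, now with $\delta=\tfrac12 e^{-2\lambda-\sigma^2/(2h^2)}$ built from the fixed values. On $B_n^c$ the lower bound \eqref{lb} gives $|\phi_X|\ge2\delta$, hence $|\phi_{emp}/\phi_X-1|\le\delta/(2\delta)=1/2$, so the ratio never winds around the origin; consequently the difference of the two distinguished logarithms coincides with the principal branch $\Log(\phi_{emp}/\phi_X)$ and satisfies $|\Log\phi_{emp}(t)-\Log\phi_X(t)|\le2|\phi_{emp}(t)/\phi_X(t)-1|\le|\phi_{emp}(t)-\phi_X(t)|/\delta$. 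I would also check that on $B_n^c$ the truncations at $\pm M_n$ are inactive, since $|\log|\phi_X||\lesssim h^{-2}$ and $|\arg\phi_X|\lesssim h^{-1}$ while $M_n=m_nh^{-2}$ with $m_n\to\infty$, so that there $B(t)=\Log\phi_{emp}-\Log\phi_X$. Using $\ex|\phi_{emp}(t)-\phi_X(t)|^2=(1-|\phi_X(t)|^2)/n\le1/n$,
\[
\ex\left[1_{B_n^c}\int_{-1/h}^{1/h}|B(t)|^2dt\right]\le\frac{1}{\delta^2}\int_{-1/h}^{1/h}\frac{dt}{n}\lesssim\frac{e^{\sigma^2/h^2}}{nh}=\frac{(\eta\log n)^{1/2}}{n^{1-\eta\sigma^2}},
\]
which tends to $0$ faster than any power of $\log n$ because $\eta\sigma^2<1$. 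On $B_n$ I would use the crude bound $|\psi_n(t)|\lesssim M_n$ on $[-1/h,1/h]$ (each summand of \eqref{deconvnoise-hatfnh} is $O(M_n)$ there, using $|\tilde\gamma_n|\lesssim m_nh^{-1}$, $|\tilde\lambda_n|\lesssim m_nh^{-2}$, $|\tilde\sigma_n^2|\lesssim m_n$ from their integral definitions) together with $|\phi_\rho|\le\lambda$, giving $\int_{-1/h}^{1/h}|B|^2\lesssim M_n^2h^{-1}\lesssim m_n^2h^{-5}$; combined with $\operatorname{P}(B_n)\lesssim e^{\sigma^2/h^2}/(nh^2)$ from Theorem \ref{phideviation}, this again decays polynomially in $n$.

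Collecting the pieces, the deterministic tail contributes $(\log n)^{-\beta}$, the parametric plug-in contributes $(\log n)^{-\beta-1/2}$, and the empirical term decays polynomially in $n$, so $\operatorname{MISE}[\hat\rho_n]\lesssim(\log n)^{-\beta}$. The hard part is the empirical term on $B_n^c$: one must justify that the difference of distinguished logarithms really reduces to the principal-branch logarithm of the ratio (the argument/winding control that the paper flags) and that the chosen $\delta$ makes the truncation inactive. Once $|\phi_{emp}/\phi_X-1|\le1/2$ is secured this collapses to a one-line Lipschitz estimate, and the fixed-$\rho$ setting is what spares us from having to make all these bounds uniform in $\rho$.
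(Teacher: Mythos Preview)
Your approach is essentially that of the paper: apply Parseval, isolate the deterministic tail $\int_{|t|>1/h}|\phi_\rho|^2\lesssim h^{2\beta}$, and on $[-1/h,1/h]$ split into the parametric plug-in errors (handled via Theorems \ref{thm-sigmatilde}, \ref{thm-lambdatilde}, \ref{thm-gam2}) and the empirical-log term, the latter controlled on $B_n^c$ by the Lipschitz bound for $\Log(\phi_{emp}/\phi_X)$ and on $B_n$ by the crude $M_n$ bound together with Theorem \ref{phideviation}. Your check that the truncation is inactive on $B_n^c$, via the explicit formula $\arg\phi_X(t)=\gamma t+\lambda\Im\phi_f(t)$ giving $|\arg\phi_X|\lesssim h^{-1}$ and then $|\arg\phi_{emp}|\le|\arg\phi_X|+|\arg(\phi_{emp}/\phi_X)|\lesssim h^{-1}$, is in fact more direct than the paper's own route, which instead invokes the rectifiability/winding argument from the proof of Theorem \ref{thm-gam2} to reach the same conclusion.
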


\section{Lower bound for estimation of $\rho$}
\label{lowbound}

In the previous section we showed that under certain smoothness
assumptions on the class of target densities $\rho,$ the
convergence rate of our estimator $\hat{\rho}_{n}$ is logarithmic.
This convergence rate can be easily understood on an intuitive
level when comparing our problem to a deconvolution problem, see
e.g.\ Section 10.1 of \cite{wasserman} for an introduction to
deconvolution problems. A deconvolution problem consists of
estimation of a density (or a distribution function) of a directly
unobservable random variable $Y$ based on i.i.d.\ copies
$X_1,\ldots,X_n$ of a random variable $X=Y+Z.$ The $X$'s can be
thought of as repetitive measurements of $Y,$ which are corrupted
by an additive measurement error $Z.$ It is well-known that if the
distribution of $Z$ is normal, and if the class of the target
densities is sufficiently large, e.g.\ some H\"older class (see
Definition 1.2 in \cite{tsyb}), the minimax convergence rate will
be logarithmic for both the mean squared error and mean integrated
squared error as measures of risk, see \cite{fan1,fan2}. We will
prove a similar result for a problem of estimation of a L\'evy
density $\rho.$
\begin{thm}
\label{lowbound-thm} Denote by $T$ an arbitrary L\'evy triplet
$(\gamma,\sigma^2,\rho),$ such that
$|\gamma|\leq\Gamma,\sigma\in(0,\Sigma],\lambda\in(0,\Lambda].$
Furthermore, let
\begin{equation}
\label{fcnd} \infint |t|^{2\beta}|\phi_f(t)|^2dt\leq C
\end{equation}
for $\beta\geq 1/2.$ Let $\mathcal{T}$ be a collection of all such
triplets. Then
\begin{equation*}
\inf_{\widetilde{\rho}_n}\sup_{\mathcal{T}}\operatorname{MISE}[\widetilde{\rho}_{n}]\gtrsim (\log n)^{-\beta},
\end{equation*}
where the infimum is taken over all estimators
$\widetilde{\rho}_n$ based on observations $X_1,\ldots,X_n.$
\end{thm}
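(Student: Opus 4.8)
The plan is to establish a lower bound for the minimax MISE via the standard two-point (or finitely-many-point) reduction technique used in minimax theory, see e.g.\ Chapter 2 of \cite{tsyb}. The key idea is that if two L\'evy densities $\rho_0$ and $\rho_1$ are sufficiently far apart in $L^2$ distance but induce marginal laws of $X_1$ that are statistically hard to tell apart from a sample of size $n$, then no estimator can achieve a rate faster than this separation. Because the convolution structure of a L\'evy process (the marginal of $X_1$ has characteristic function $e^{\psi(t)}$ with $\psi$ the L\'evy exponent) behaves like deconvolution against a Gaussian factor $e^{-\sigma^2t^2/2}$, I expect the Gaussian component to be the source of the ill-posedness that forces the logarithmic rate, exactly as in the normal-deconvolution lower bounds of \cite{fan1,fan2}.

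First I would fix the nuisance parameters $\gamma$ and $\sigma$ (say $\gamma=0$ and some $\sigma\in(0,\Sigma]$) so that only $\rho$ varies, which is legitimate since the supremum over $\mathcal{T}$ dominates the supremum over any sub-family. Then I would construct a perturbation: take a fixed baseline L\'evy density $\rho_0\in W_{sym}^*$ and define $\rho_1=\rho_0+\zeta_n g$, where $g$ is a smooth, compactly-supported (in Fourier domain) bump function and $\zeta_n$ is an amplitude to be calibrated. I would design $g$ so that $\phi_g$ is supported on a shrinking frequency band of width tied to $h^{-1}\sim(\eta\log n)^{1/2}$, mirroring the bandwidth in Condition \ref{conditionh}; this ensures both $\rho_1$ stays in the smoothness class \eqref{fcnd} (controlling $\int|t|^{2\beta}|\phi_f|^2dt$) and that the $L^2$ separation $\|\rho_1-\rho_0\|_2^2\gtrsim\zeta_n^2$ is of the target order $(\log n)^{-\beta}$. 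The crux of the calibration is to balance the separation against the statistical distinguishability.

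The distinguishability step is where the Gaussian ill-posedness enters and is \textbf{the main obstacle}. I would bound the Kullback--Leibler divergence (or Hellinger/chi-square distance) between the $n$-fold product laws $Q_0^{\otimes n}$ and $Q_1^{\otimes n}$ of the marginal densities $q_0,q_1$ of $X_1$ under $\rho_0$ and $\rho_1$. By Parseval and a chi-square-type bound, the single-observation divergence is controlled by an integral of $|\phi_{q_1}(t)-\phi_{q_0}(t)|^2$ against a weight; since $\phi_{q_j}(t)=\exp[\psi_j(t)]$ carries the factor $e^{-\sigma^2t^2/2}$, the difference is damped by $e^{-\sigma^2t^2/2}$, and for frequencies near $h^{-1}\sim(\log n)^{1/2}$ this produces a factor of order $e^{-\sigma^2/(2h^2)}=n^{-\sigma^2\eta/2}$. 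Keeping $\chi^2(Q_1,Q_0)\lesssim \zeta_n^2 e^{-\sigma^2/h^2}$ and requiring $n\,\chi^2\lesssim 1$ forces $\zeta_n^2\lesssim e^{\sigma^2/h^2}/n$, which is a constant order for the chosen $h$, so the separation $\zeta_n^2$ is limited only by the smoothness constraint and turns out to be of order $(\log n)^{-\beta}$.

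Finally I would invoke a standard two-point lemma (e.g.\ Theorem 2.2 in \cite{tsyb}): if the product measures satisfy $\mathrm{KL}(Q_0^{\otimes n},Q_1^{\otimes n})\leq\alpha<\infty$, then $\inf_{\widetilde\rho_n}\sup_{j\in\{0,1\}}\ex_j\|\widetilde\rho_n-\rho_j\|_2^2\gtrsim\|\rho_1-\rho_0\|_2^2\gtrsim\zeta_n^2\asymp(\log n)^{-\beta}$, yielding the claimed bound. The delicate points to verify carefully are that $\rho_1$ remains a genuine (nonnegative) L\'evy density with $\lambda\in(0,\Lambda]$ — which constrains the sign and scale of $g$ — and that the smoothness condition \eqref{fcnd} with the specific exponent $2\beta$ and $\beta\geq1/2$ is exactly saturated by the frequency-localized bump, so that no faster separation is available; this saturation is what pins the exponent $-\beta$ in the rate.
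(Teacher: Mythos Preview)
Your overall strategy---two-point reduction with a Fourier-localized perturbation, exploiting the Gaussian factor $e^{-\sigma^2 t^2/2}$ to make the hypotheses indistinguishable, and calibrating the frequency scale as $(\log n)^{1/2}$---is exactly the route the paper takes. The paper's perturbation is $f_2(x)=f_1(x)+\delta_n^{\beta-1/2}H(x/\delta_n)$ with $\phi_H$ supported on $[1,2]$ and $\delta_n\asymp(\log n)^{-1/2}$, which is your bump $g$ up to parametrisation, and the separation $\|\rho_1-\rho_0\|_2^2\asymp\delta_n^{2\beta}$ matches your $\zeta_n^2$.

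There is, however, one step in your sketch that would not go through as written. You propose to bound the $\chi^2$-divergence ``by Parseval'' as an integral of $|\phi_{q_1}-\phi_{q_0}|^2$ against a weight, but $\chi^2(q_1,q_0)=\int(q_1-q_0)^2/q_0\,dx$ has the denominator $q_0(x)$, which decays in the tails and blocks a direct Plancherel argument. The paper deals with this by a specific choice of baseline: $f_1$ is taken as the average of a bilateral gamma density and a stable density with index $\alpha_2\in(1,2)$, so that $f_1(x)\sim |x|^{-1-\alpha_2}$ at infinity; then, after bounding $q_0(x)\gtrsim f_1(|x|+A)$, the integral is split into $\{|x|\le A\}$ (where Parseval applies directly) and $\{|x|>A\}$ (where one must control $\int x^4(q_1-q_0)^2\,dx$, i.e.\ the $L^2$-norm of $(\phi_{q_1}-\phi_{q_0})''$). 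The latter forces additional smoothness on the bump $H$ and is the reason the paper requires $\phi_H$ to be twice continuously differentiable and compactly supported away from zero. Without this construction your ``chi-square-type bound'' is not justified; once you insert it, your argument and the paper's coincide.
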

Using similar techniques, it is expected that lower bounds of the
logarithmic order can be obtained for estimation of
$\gamma,\sigma^2$ and $\lambda$ as well. Such a result is not
surprising e.g.\ for $\sigma^2,$ if one recalls comparable results
from \cite{matias} for estimation of the error variance in the
supersmooth deconvolution problem. Another paper containing
examples of the breakdown of the usual root ${n}$ convergence rate
for estimation of a finite-dimensional parameter is
\cite{ishwaran}. We do not pursue this question any further. We
also notice that the logarithmic lower bounds for estimation of
the components of a characteristic triplet (under a different
observation scheme) were obtained in \cite{belomestny}.

Our estimation procedure for $\rho$ in Section
\ref{deconvnoise-results} relies on the assumption that the random
variable $X$ has a density (the latter is ensured by the condition
$\sigma>0$). If $\sigma=0,$ then an approach of \cite{gug} may be
used for estimation of $\rho.$ For completeness purposes, however,
we will show that the lower bound for the minimax risk in this
case is not logarithmic as in Theorem \ref{lowbound-thm}, but
polynomial.

\begin{thm}
\label{thm-lbnd} Let $\mathcal{T}$ denote a collection of L\'evy
triplets $T=(\gamma,0,\rho),$ such that $|\gamma|\leq \Gamma$ and
$\lambda\in(0,\Lambda].$ Furthermore, let $\phi_f$ satisfy
\eqref{fcnd} for $\beta\geq 1/2.$ Then
\begin{equation*}
\inf_{\widetilde{\rho}_n}\sup_{\mathcal{T}}\operatorname{MISE}[\widetilde{\rho}_{n}]\gtrsim
n^{-2\beta/(2\beta+1)},
\end{equation*}
where the infimum is taken over all estimators
$\widetilde{\rho}_n$ based on observations $X_1,\ldots,X_n.$
\end{thm}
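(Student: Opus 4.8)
I would prove Theorem \ref{thm-lbnd} by Assouad's cube technique (see the exposition in \cite{tsyb}), exploiting the structural difference between the present case $\sigma=0$ and the case $\sigma>0$ of Theorem \ref{lowbound-thm}. When $\sigma=0$ the modulus of the characteristic function of a single observation stays bounded away from zero: by \eqref{lb} with $\sigma=0$ one has $|\phi_X(t)|=e^{-\lambda+\lambda\Re(\phi_f(t))}\geq e^{-2\lambda}\geq e^{-2\Lambda}$ for all $t$. Hence there is no Gaussian damping of high frequencies, the associated deconvolution is only mildly ill-posed rather than severely (logarithmically) ill-posed, and one should recover the usual polynomial nonparametric rate. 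I would fix a base triplet $(\gamma_0,0,\rho_0)$ with $\rho_0=\lambda_0 f_0$, where $f_0$ is a smooth density bounded away from zero on a compact interval $I$, and perturb only the L\'evy density.

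Next I would set up the hypercube of candidates. Fix a smooth $\Psi$ supported on $[0,1]$ with $\int\Psi(x)dx=0$, put $m\sim h^{-1}$, place disjoint bumps $\psi_j(x)=c\,h^{\beta}\Psi((x-x_j)/h)$ at grid points $x_j=jh\in I$, and for $\theta\in\{0,1\}^m$ define $\rho_\theta=\rho_0+\sum_{j=1}^m\theta_j\psi_j$. Choosing $\int\Psi=0$ keeps the intensity $\lambda_\theta\equiv\lambda_0\leq\Lambda$ and the drift $\gamma_0$ fixed, so every $\rho_\theta$ belongs to the class, and the smallness of the amplitude ($h^\beta\to0$ against the lower bound on $f_0$) keeps $\rho_\theta\geq0$. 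The bookkeeping here is to verify \eqref{fcnd} for $f_\theta=\rho_\theta/\lambda_0$: one finds $\int|t|^{2\beta}|\widehat{\psi_j}(t)|^2dt\sim h$ per bump, and summing the $m\sim h^{-1}$ disjoint bumps (for integer $\beta$ directly by disjointness of the supports of the derivatives, for fractional $\beta$ through the Fej\'er-type estimate for $\sum_j e^{-itx_j}$) gives a total Sobolev seminorm of order one, which sits inside the ball of radius $C$ after $c$ is taken small. At the same time each flipped coordinate separates the targets by $\|\psi_j\|_2^2\sim c^2 h^{2\beta+1}$ in $L^2$.

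The heart of the argument, and the step I expect to be the main obstacle, is a \emph{quadratic} bound on the statistical distance between the laws $P_X^\theta$ for neighbouring $\theta,\theta'$ differing in one coordinate $j$. Since $\lambda$ and $\gamma$ are held fixed, $\phi_X^{\theta'}(t)=\phi_X^{\theta}(t)\exp[\widehat{\psi_j}(t)]$, and because $\|\psi_j\|_1\sim h^{\beta+1}\to0$ the factor $\exp[\widehat{\psi_j}(t)]-1$ equals $\widehat{\psi_j}(t)$ up to a negligible correction. Inverting, the two laws share the common atom of mass $e^{-\lambda}$ at $\gamma$ and differ on their absolutely continuous parts by $q^{\theta'}-q^{\theta}\approx P_X^\theta*\psi_j$, which has $L^2$ norm at most $\|\psi_j\|_2\sim c\,h^{\beta+1/2}$; crucially, the bound $|\phi_X|\geq e^{-2\Lambda}$ means this difference is \emph{not} exponentially damped, which is exactly what keeps us in the polynomial (rather than logarithmic) regime. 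To convert this into a Hellinger bound I would use $H^2(P_X^\theta,P_X^{\theta'})\leq\int (q^{\theta'}-q^{\theta})^2/q^{\theta}\,dx$ together with the fact that the absolutely continuous part of $P_X^\theta$ is bounded below near $\gamma+x_j$ by its one-jump contribution $e^{-\lambda}\lambda_0 f_0(\cdot-\gamma)$, which is strictly positive there. The delicate point is controlling the non-localised tail part $(\text{a.c.})*\psi_j$, where $q^\theta$ is small; here I would exploit $\int\psi_j=0$ and the smoothness and decay of $q^\theta$ to show the tail contributes at the same order, yielding $H^2(P_X^\theta,P_X^{\theta'})\lesssim\|\psi_j\|_2^2\sim c^2h^{2\beta+1}$. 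This quadratic bound is essential: the crude total-variation estimate $\mathrm{TV}\leq\tfrac12\|\psi_j\|_1$ is only linear in the perturbation and would produce a strictly weaker rate.

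Finally I would assemble the pieces. Since $\{T_\theta\}\subset\mathcal{T}$, it suffices to bound the risk over this subfamily. Choosing $h\sim n^{-1/(2\beta+1)}$ makes $nH^2(P_X^\theta,P_X^{\theta'})=O(1)$, so the Hellinger affinity of the product measures $(P_X^\theta)^{\otimes n}$ for neighbouring hypotheses stays bounded away from zero. Assouad's lemma then gives
\begin{equation*}
\inf_{\widetilde{\rho}_n}\sup_{\mathcal{T}}\operatorname{MISE}[\widetilde{\rho}_n]\gtrsim m\cdot\|\psi_j\|_2^2\sim h^{-1}\cdot h^{2\beta+1}=h^{2\beta}\sim n^{-2\beta/(2\beta+1)},
\end{equation*}
which is the asserted bound. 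The hypothesis $\beta\geq1/2$ enters only to guarantee that the perturbed $f_\theta$ are genuine bounded, continuous densities of the required regularity.
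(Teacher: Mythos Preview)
Your Assouad-cube construction is sound and will deliver the rate, but it takes a genuinely different route from the paper. The paper uses only a two-hypothesis reduction, perturbing a fixed $\rho_1=\lambda f_1$ by a single rescaled bump $\lambda\delta_n^{\beta-1/2}H(\cdot/\delta_n)$ with $H$ compactly supported on $[-1,1]$, so that $\|\rho_2-\rho_1\|_2^2\asymp\delta_n^{2\beta}$. The decisive simplification---which entirely circumvents the ``delicate tail'' step you single out---is that the paper does \emph{not} bound the distance between the discrete-observation laws at all. It first lower-bounds the minimax risk by the risk over all estimators based on a continuous record $(X_t)_{0\le t\le n}$ (a strictly more informative experiment), and then invokes the explicit Kullback--Leibler formula for compound Poisson path laws of equal intensity, $K(\mathbb P_{2n},\mathbb P_{1n})=nK(\rho_2,\rho_1)$ from Theorem~A.1 of \cite{cont}, together with $K(\rho_2,\rho_1)\le\chi^2(\rho_2,\rho_1)=\int(\rho_2-\rho_1)^2/\rho_1$. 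This $\chi^2$ is computed directly on the L\'evy densities---no convolution, no marginal density $q^\theta$ in the denominator---and is elementary because $\rho_1$ is bounded below on the support of the perturbation. The same continuous-record device would dissolve the hard step in your argument: for neighbouring cube vertices it gives $K(\mathbb P^n_{\theta'},\mathbb P^n_\theta)\le n\int\psi_j^2/\rho_\theta\asymp nh^{2\beta+1}$ in one line, since $\rho_\theta$ is bounded below on the support of $\psi_j$ by construction. What your hypercube buys in return is the standard, robust mechanism for sharp integrated-$L^2$ lower bounds: with a single localised perturbation one has $\chi^2(\rho_2,\rho_1)$ and $\|\rho_2-\rho_1\|_2^2$ of the same order (both $\asymp\delta_n^{2\beta}$ here, $\rho_1$ being bounded above and below near the origin), so a bare two-point calibration by itself only yields the parametric rate $n^{-1}$; the many-hypothesis structure is precisely what separates $n^{-2\beta/(2\beta+1)}$ from it.
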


This theorem in essence says that estimation of the L\'evy density
$\rho$ in the case $\sigma=0$ seems to be as difficult as e.g.\
nonparametric density estimation based on i.i.d.\ observations
coming from the target density itself, see e.g.\ \cite{vaart0}.
This result has a parallel in \cite{belomestny}. In absence of the
corresponding upper bound for estimation of $\rho$ nothing can be
said about how sharp the lower bound in Theorem \ref{thm-lbnd} is,
but in any case the polynomial minimax convergence rate seems to
be natural. An upper bound of order $n^{-beta/(2\beta+1)}$ has
been obtained in the compound Poisson model in \cite{comte} for
the mean integrated squared error when estimating $x\rho(x)$ under
the condition that the class of L\'evy densities is a Sobolev
class $\Sigma(\beta,C).$

\section{Proofs}
\label{deconvnoise-proofs}
We first prove the following technical lemma.
\begin{lem}
\label{trlemma}
Let the sets $B_n$ and $B_n^c$ be defined by \eqref{bndef}. Suppose Conditions \ref{conditionh} and \ref{conditionm} hold. Then there exists an integer $n_0,$ such that on the set $B_n^c$ for all $n\geq n_0$ we have
\begin{equation}
\label{tr1}
\max\{\min\{M_n,\log(|\phi_{emp}(t)|)\},-M_n\}=\log(|\phi_{emp}(t)|)
\end{equation}
for $t$ restricted to the interval $[-h^{-1},h^{-1}]$ and for all
$\rho\in W(\beta,L,\Lambda,K),\sigma\in(0,\Sigma]$ and
$|\gamma|\leq\Gamma.$ Furthermore,
\begin{equation}
\label{tr2}
\max\{\min\{M_n,\arg(\phi_{emp}(t))\},-M_n\}=\arg(\phi_{emp}(t))
\end{equation}
for $t$ restricted to the interval $[-h^{-1},h^{-1}]$ and for all
$\rho\in W_{sym}(\beta,L,\Lambda,K),\sigma\in(0,\Sigma]$ and
$|\gamma|\leq\Gamma.$ Here $\arg$ denotes the imaginary part of
the distinguished logarithm of $\phi_{emp}(t),$ i.e.\ a continuous
version of its argument, such that $\arg\phi_{emp}(0)=0.$
\end{lem}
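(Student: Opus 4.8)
The plan is to show that on the good set $B_n^c$, both the modulus part (equation \eqref{tr1}) and the argument part (equation \eqref{tr2}) of the distinguished logarithm eventually fall strictly inside the truncation band $[-M_n, M_n]$, so that the truncating operation $\max\{\min\{M_n,\cdot\},-M_n\}$ acts as the identity. The recurring theme is: on $B_n^c$ the empirical characteristic function $\phi_{emp}$ is uniformly close to $\phi_X$ on $[-h^{-1}, h^{-1}]$, and $\phi_X$ itself is bounded away from zero there by \eqref{lb}; hence $\log|\phi_{emp}|$ and $\arg\phi_{emp}$ inherit controlled bounds from the corresponding quantities for $\phi_X$. Since $M_n = m_n h^{-2}$ with $m_n \to \infty$ and $h^{-2} = \eta \log n$ by Conditions \ref{conditionh} and \ref{conditionm}, these bounds will be of order $h^{-2}$ while $M_n$ grows by an extra factor $m_n$; the extra $m_n$ gives the needed slack once $n$ is large.

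For the modulus claim \eqref{tr1}, I would first establish a lower bound on $|\phi_{emp}(t)|$: on $B_n^c$ we have $|\phi_{emp}(t)| \geq |\phi_X(t)| - \delta \geq (1/2)e^{-2\Lambda - \Sigma^2/(2h^2)}$ using \eqref{lb} and the choice $\delta = (1/2)e^{-2\Lambda - \Sigma^2/(2h^2)}$. Taking logarithms yields $\log|\phi_{emp}(t)| \geq -\log 2 - 2\Lambda - \Sigma^2/(2h^2)$, which in absolute value is bounded above by a quantity of order $h^{-2}$. For the upper bound, $|\phi_{emp}(t)| \leq 1$ always, so $\log|\phi_{emp}(t)| \leq 0 < M_n$. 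Comparing the lower bound $-\log 2 - 2\Lambda - \Sigma^2/(2h^2)$ with $-M_n = -m_n h^{-2}$, since $\Sigma^2/2$ is a fixed constant and $m_n \to \infty$, there is an $n_0$ beyond which $-M_n$ lies strictly below the lower bound uniformly over all $\rho \in W, \sigma \in (0,\Sigma], |\gamma| \leq \Gamma$; this uniformity is immediate because all the bounding constants $\Lambda, \Sigma$ are the class parameters. Hence the truncation is inactive and \eqref{tr1} holds.

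For the argument claim \eqref{tr2}, the strategy is to bound $|\arg\phi_{emp}(t)|$ on $[-h^{-1},h^{-1}]$. Here I would exploit symmetry: for $\rho \in W_{sym}$ the density $f$ is symmetric, so $\phi_f$ is real-valued, whence $\Im(\Log\phi_X(t)) = \gamma t$ by the expression $\Im(\Log(\phi_X(t))) = \gamma t + \lambda\Im(\phi_f(t))$ derived before \eqref{gam1}. Thus $|\arg\phi_X(t)| = |\gamma t| \leq \Gamma h^{-1}$ on the interval, which is of order $h^{-1}$ and hence negligible compared to $M_n \sim m_n h^{-2}$. The remaining task is to transfer this to $\phi_{emp}$: on $B_n^c$ the distinguished logarithm of $\phi_{emp}$ is well-defined and one must control $|\arg\phi_{emp}(t) - \arg\phi_X(t)|$. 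Since both characteristic functions are bounded away from zero (by $(1/2)e^{-2\Lambda-\Sigma^2/(2h^2)}$ and $e^{-2\Lambda-\Sigma^2/(2h^2)}$ respectively) and lie within $\delta$ of each other uniformly in $t$, the difference of their continuous arguments is controlled by an integral/continuity estimate for $\Log$ along $[-h^{-1},h^{-1}]$, yielding a bound that is again at most of order $h^{-2}$ (with the constant governed by $\delta^{-1}$ and the length of the interval). The upshot is $|\arg\phi_{emp}(t)| \lesssim h^{-2}$, which is eventually dominated by $M_n = m_n h^{-2}$.

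\textbf{The main obstacle} I anticipate is the uniform control of the argument of $\phi_{emp}$ in \eqref{tr2}, because the distinguished logarithm is defined only through analytic continuation along the path $t \in [-h^{-1}, h^{-1}]$, and a naive pointwise bound $|\arg z| \leq$ (something) for a single complex number $z$ does not suffice—one needs that the continuous branch does not accumulate winding as $t$ traverses the interval. The clean way around this is to bound the \emph{difference} $\arg\phi_{emp}(t) - \arg\phi_X(t)$ rather than $\arg\phi_{emp}(t)$ directly: since both $\Log\phi_{emp}$ and $\Log\phi_X$ start at $0$ when $t=0$ (after removing the known modulus and, for $\phi_X$, the known real structure), the difference is a continuous function vanishing at the origin whose derivative is controlled by $|\phi_{emp} - \phi_X|$ divided by the moduli, both of which are uniformly controlled on $B_n^c$. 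Integrating this derivative bound over $[-h^{-1}, h^{-1}]$ gives the required $O(h^{-2})$ estimate, after which the slack factor $m_n$ closes the argument. This is exactly the technical control of the imaginary part of the distinguished logarithm that the text flags as the main difficulty in the proofs of Theorems \ref{thm-gamma} and \ref{thm-f}.
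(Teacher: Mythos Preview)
Your treatment of \eqref{tr1} is correct and essentially the same as the paper's: the paper bounds $|\log|\phi_{emp}(t)||$ by writing it as $|\log|\phi_X(t)|| + |\log|\phi_{emp}/\phi_X||$ and using $|\log(1+z)|\le |z|+|z|^2$ for $|z|<1/2$ together with \eqref{logineqhalf}, arriving at the same $2\Lambda + \Sigma^2/(2h^2)+O(1)$ bound that your direct lower bound on $|\phi_{emp}|$ gives.

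For \eqref{tr2} your overall idea (control $\arg\phi_{emp}-\arg\phi_X$, then use that $\arg\phi_X(t)=\gamma t$ under symmetry) is sound, but the execution you sketch is more elaborate than necessary and the step invoking ``a constant governed by $\delta^{-1}$ and the length of the interval'' is dangerous: $\delta^{-1}\asymp e^{\Sigma^2/(2h^2)}$ is exponentially large in $h^{-2}$, so an integral bound of the form $\delta^{-1}\cdot h^{-1}$ would be vastly larger than $M_n$ and would not close the argument. The paper avoids any integration by a one-line geometric observation: on $B_n^c$ the ratio $\phi_{emp}(t)/\phi_X(t)$ lies in the disk $|z-1|<1/2$ for every $t\in[-h^{-1},h^{-1}]$ (this is exactly \eqref{logineqhalf}); since this disk does not contain the origin, the continuous path $t\mapsto \phi_{emp}(t)/\phi_X(t)$ cannot wind around $0$, so its continuous argument is bounded by an absolute constant (the paper uses $2\pi$). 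Thus $|\arg\phi_{emp}(t)|\le |\arg\phi_X(t)|+O(1)$. Incidentally, you correctly record $\arg\phi_X(t)=\gamma t$; the paper writes $\arg\phi_X(t)=0$, apparently overlooking the drift, but this is harmless since $|\gamma t|\le \Gamma h^{-1}$ is still $o(M_n)$. Replacing your integral estimate by this disk/winding observation gives a clean $O(h^{-1})$ bound on $|\arg\phi_{emp}(t)|$, which is comfortably below $M_n=m_n h^{-2}$.
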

\begin{proof}
Formula \eqref{tr1} can be seen as follows:
\begin{equation}
\label{dlr}
\begin{split}
|\log(|\phi_{emp}(t))||&\leq |\log(|\phi_X(t)|)|+\left|\log\left(\left|\frac{\phi_{emp}(t)}{\phi_X(t)}\right|\right)\right|\\
&\leq |\log(|\phi_X(t)|)|+\left|\frac{\phi_{emp}(t)}{\phi_X(t)}-1\right|+\left|\frac{\phi_{emp}(t)}{\phi_X(t)}-1\right|^2\\
&\leq |\log(|\phi_X(t)|)|+\frac{3}{4}\\
&\leq 2\Lambda+\frac{\Sigma^2}{2h^2}+\frac{3}{4}.
\end{split}
\end{equation}
Here in the third line we used an elementary inequality
$|\log(1+z)-z|\leq |z|^2$ valid for $|z|<1/2$ and the fact
that on the set $B_n^c$ we have
\begin{equation}
\label{logineqhalf}
\left|\left|\frac{\phi_{emp}(t)}{\phi_X(t)}\right|-1\right|\leq\left|\frac{\phi_{emp}(t)}{\phi_X(t)}-1\right|<\frac{1}{2},
\end{equation}
while in the last line we used the bound $|\log|\phi_X(t)||\leq
2\Lambda+{\Sigma^2}/{(2h^2)}.$ The equality \eqref{tr1} now is
immediate from Conditions \ref{conditionh} and \ref{conditionm},
because the upper bound for $|\log(|\phi_{emp}(t)|)|$ grows slower
than $M_n.$ Next we prove \eqref{tr2}. The symmetry of $\rho$
implies that $\phi_{\rho}$ is real-valued and hence
$\arg(\phi_X(t))=0.$ On the set $B_n^c$ we have
$|\arg(\phi_{emp}(t))|\leq 2\pi,$ because the path $\phi_{emp}(t)$
cannot make a turn around zero on this set. This proves
\eqref{tr2}, since $M_n$ diverges to infinity.
\end{proof}

Now we are ready to prove Theorems \ref{thm-sigmatilde}--\ref{thm-f}.

\begin{proof}[Proof of Theorem \ref{thm-sigmatilde}]
Write
\begin{equation*}
\ex[(\tilde{\sigma}_n^2-\sigma^2)^2]=\ex[(\tilde{\sigma}_n^2-\sigma^2)^2 1_{B_n}]+\ex[(\tilde{\sigma}_n^2-\sigma^2)^2 1_{B_n^c}]
=I+II,
\end{equation*}
where the set $B_n$ is defined as in \eqref{bndef}. For $I$ we have
\begin{align*}
I&\lesssim \left(M_n^2\left(\int_{-1/h}^{1/h}|v^h(t)|dt\right)^2+\Sigma^4\right)\operatorname{P}(B_n)\\
&\lesssim \left(M_n^2\left(\int_{-1/h}^{1/h}|v^h(t)|dt\right)^2+\Sigma^4\right)\frac{e^{\Sigma^2/h^2}}{nh^2}\\
&=\left(M_n^2h^4\left(\int_{-1}^{1}|v(t)|dt\right)^2+\Sigma^4\right)\frac{e^{\Sigma^2/h^2}}{nh^2},
\end{align*}
where we used Theorem \ref{phideviation} to see the second line.
Observe that under Conditions \ref{conditionh} and
\ref{conditionm} the last term in the above chain of inequalities
converges to zero faster than $h^{2\beta+6}.$ Now we turn to $II.$
On the set $B_n^c,$ for $n$ large enough, truncation in the definition of
$\tilde{\sigma}_n^2$ becomes unimportant, see Lemma \ref{trlemma},
and we have
\begin{align*}
II&=\ex\left[\left(\int_{-1/h}^{1/h}\log(|\phi_{emp}(t)|)v^h(t)dt-\sigma^2\right)^21_{B_n^c}\right]\\
&=\ex\left[\left(\int_{-1/h}^{1/h}\log\left(\left|\frac{\phi_{emp}(t)}{\phi_X(t)}\right|\right)v^h(t)dt+\int_{-1/h}^{1/h}\log(|\phi_X(t)|)v^h(t)dt-\sigma^2\right)^21_{B_n^c}\right].
\end{align*}
Using this fact, \eqref{sig1} and an elementary inequality $(a+b)^2\leq 2(a^2+b^2),$ we obtain that
\begin{align*}
II & \lesssim \Lambda^2\left(\int_{-1/h}^{1/h}\Re(\phi_f(t))v^h(t)dt\right)^2\\
& + \ex\left[\left(\int_{-1/h}^{1/h}\log\left(\left|\frac{\phi_{emp}(t)}{\phi_X(t)}\right|\right)v^h(t)dt\right)^21_{B_n^c}\right]\\
& = III+IV.
\end{align*}
For $III$ we have
\begin{align*}
III & \lesssim h^{2\beta}\left(\int_{-1/h}^{1/h}t^{\beta}\Re(\phi_f(t))\frac{v^h(t)}{(ht)^{\beta}}dt\right)^2\\
& \lesssim h^{2\beta+6}\left(\infint |t^{\beta}||\Re(\phi_f(t))|dt\right)^2\\
& \lesssim h^{2\beta+6}\left(\infint |t^{\beta}||\phi_f(t)|dt\right)^2\\
& \lesssim h^{2\beta+6}\\
& \lesssim (\log n)^{-\beta-3},
\end{align*}
where in the second line we used Condition \ref{conditionv}, to obtain the third line we used the fact that $|\Re(\phi_f(t))|\leq |\phi_f(t)|+|\phi_f(-t)|,$ while the fourth line follows from Condition \ref{conditionf}. We turn to $IV.$ We have
\begin{align*}
IV & \lesssim \ex\left[\left(\int_{-1/h}^{1/h}\left|\frac{\phi_{emp}(t)}{\phi_X(t)}-1\right|v^h(t)dt\right)^2 1_{B_n^c}\right]\\
& + \ex\left[ \left(\int_{-1/h}^{1/h}\left\{\log\left(\left|\frac{\phi_{emp}(t)}{\phi_X(t)}\right|\right)-\left(\left|\frac{\phi_{emp}(t)}{\phi_X(t)}\right|-1\right)\right\}v^h(t)dt\right)^2 1_{B_n^c} \right]\\
& = V+VI.
\end{align*}
Some further bounding and an application of the Cauchy-Schwarz inequality give
\begin{equation*}
V\lesssim
e^{4\Lambda+\Sigma^2/h^2}\int_{-1/h}^{1/h}(v^h(t))^2dt\ex\left[\int_{-1/h}^{1/h}|\phi_{emp}(t)-\phi_X(t)|^2dt\right].
\end{equation*}
Parseval's identity and Proposition 1.7 of \cite{tsyb} applied to
the sinc kernel then yield
\begin{equation*}
\ex\left[\int_{-1/h}^{1/h}|\phi_{emp}(t)-\phi_X(t)|^2dt\right]=2\pi
\ex\left[\int_{-1/h}^{1/h}(q_n(x)-\ex[q_n(x)])^2dx\right]\lesssim
\frac{1}{nh},
\end{equation*}
whence
\begin{equation*}
V\lesssim e^{\Sigma^2/h^2}h^4\frac{1}{n}.
\end{equation*}
As far as $VI$ is concerned, using \eqref{logineqhalf}, an elementary inequality $|\log(1+z)-z|\leq |z|^2,$ valid for $|z|<1/2,$ and the Cauchy-Schwarz inequality, we obtain that
\begin{align*}
VI & \lesssim \int_{-1/h}^{1/h}(v^h(t))^2dt\ex\left[\int_{-1/h}^{1/h}\left|\frac{\phi_{emp}(t)}{\phi_X(t)}-1\right|^4dt1_{B_n^c}\right]\\
& \leq \frac{1}{4}\int_{-1/h}^{1/h}(v^h(t))^2dt\ex\left[\int_{-1/h}^{1/h}\left|\frac{\phi_{emp}(t)}{\phi_X(t)}-1\right|^2dt\right]\\
& \lesssim e^{\Sigma^2/h^2}\int_{-1/h}^{1/h}(v^h(t))^2dt\ex\left[\int_{-1/h}^{1/h}\left|\phi_{emp}(t)-\phi_X(t)\right|^2dt\right].
\end{align*}
Hence $VI$ can be analysed in the same way as $V.$ From the above bounds on $V$ and $VI$ it also follows that $IV$ is negligible in comparison to $III.$ Combination of all these intermediate results completes the proof of the theorem.
\end{proof}

\begin{proof}[Proof of Theorem \ref{thm-lambdatilde}]
The proof is quite similar to that of Theorem \ref{thm-sigmatilde}. Write
\begin{equation*}
\ex[(\tilde{\lambda}_n-\lambda)^2]=\ex[(\tilde{\lambda}_n-\lambda)^2 1_{B_n}]+\ex[(\tilde{\lambda}_n-\lambda)^2 1_{B_n^c}]=I+II.
\end{equation*}
By an argument similar to that in the proof of Theorem \ref{thm-sigmatilde},
\begin{equation*}
I\lesssim (M_n^2\left(\int_{-1}^{1}|u(t)|dt\right)^2+\Lambda^2)\frac{e^{\Sigma^2/h^2}}{nh^2}.
\end{equation*}
This is negligible compared to $h^{2\beta+2}.$ Now we turn to $II.$\label{IIbound} We have
\begin{align*}
II & =\ex\left[\left(\int_{-1/h}^{1/h}\log(|\phi_{emp}(t)|)u^h(t)dt-\lambda\right)^21_{B_n^c}\right]\\
&=\ex\left[\left(\int_{-1/h}^{1/h}\{\log\left(\left|\frac{\phi_{emp}(t)}{\phi_X(t)}\right|\right)+\log(|\phi_X(t)|)\}u^h(t)dt-\lambda\right)^21_{B_n^c}\right]\\
& \lesssim \Lambda^2\left(\int_{-1/h}^{1/h}\Re(\phi_f(t))u^h(t)dt\right)^2\\
& + \ex\left[\left(\int_{-1/h}^{1/h}\log\left(\left|\frac{\phi_{emp}(t)}{\phi_X(t)}\right|\right)u^h(t)dt\right)^21_{B_n^c}\right]\\
& = III+IV.
\end{align*}
Here in the third line we used \eqref{lam1}. Similar as we did it for $III$ in the proof of Theorem \ref{thm-sigmatilde},
one can check that in this case as well $III\lesssim h^{2\beta+2}.$ As far as $IV$ is
concerned, it is of order $e^{\Sigma^2/h^2}n^{-1},$ which can be seen by exactly the same reasoning as in the proof of Theorem
\ref{thm-sigmatilde}. Combination of these results completes the proof of the theorem, because under Condition \ref{conditionh} the dominating term is $III.$
\end{proof}

\begin{proof}[Proof of Theorem \ref{phideviation}]
By Chebyshev's inequality
\begin{equation*}
\operatorname{P}(B_n)\leq\frac{1}{\delta^2} \ex\left[\left(\sup_{t\in[-h^{-1},h^{-1}]}|\phi_{emp}(t)-\phi_X(t)|\right)^2\right].
\end{equation*}
Thus we need to bound the expectation on the right-hand side. This
will be done via reasoning similar to that on pp.\ 326--327 in
\cite{matias}. For all unexplained terminology and notation used
in the sequel we refer to Chapter 2 of \cite{vaart}. Notice that
\begin{equation*}
\ex\left[\left(\sup_{t\in[-h^{-1},h^{-1}]}|\phi_{emp}(t)-\phi_X(t)|\right)^2\right]=\frac{1}{n}\ex\left[\left(\sup_{t\in[-h^{-1},h^{-1}]}|G_n v_t|\right)^2\right].
\end{equation*}
Here $G_n v_t$ denotes an empirical process defined by
\begin{equation*}
G_n v_t=\frac{1}{\sqrt{n}}\sum_{j=1}^n(v_t(X_j)-\operatorname{E}v_t(X_j)),
\end{equation*}
where the function $v_t:x\mapsto e^{itx}.$ Introduce the functions $v_t^{1}:x\mapsto\cos(tx)$ and $v_t^2:x\mapsto \sin(tx).$ Then
\begin{align*}
\ex\left[\left(\sup_{t\in[-h^{-1},h^{-1}]}|G_n v_t|\right)^2\right]&\lesssim \ex\left[\left(\sup_{t\in[-h^{-1},h^{-1}]}|G_n v_t^1|\right)^2\right]\\
&+\ex\left[\left(\sup_{t\in[-h^{-1},h^{-1}]}|G_n v_t^2|\right)^2\right].
\end{align*}
As it will turn out below, both terms on the right-hand side can be treated in the same manner. Observe that the mean value theorem implies
\begin{equation}
\label{envelope}
|v_t^i(x)-v_s^i(x)|\leq |x||t-s|
\end{equation}
for $i=1,2,$ i.e.\ $v_t^i$ is Lipshitz in $t.$ Theorem 2.7.11 of
\cite{vaart} applies and gives that the bracketing number $N_{[]}$
of the class of functions $\mathbb{F}_n$ (this refers either to
$v_t^1$ or $v_t^2$ for $|t|\leq h^{-1}$) is bounded by the
covering number $N$ of the interval $I_n=[-h^{-1},h^{-1}],$ i.e.\
\begin{equation*}
N_{[]}(2\epsilon\left\|x\right\|_{\mathbb{L}_2(Q)};\mathbb{F}_n;\mathbb{L}_2(Q))\leq N(\epsilon;I_n;|\cdot|).
\end{equation*}
Here $Q$ is any discrete probability measure, such that $\left\|x\right\|_{\mathbb{L}_2(Q)}>0.$ Since
\begin{equation*}
N(\epsilon\left\|x\right\|_{\mathbb{L}_2(Q)};\mathbb{F}_n;\mathbb{L}_2(Q))\leq N_{[]}(2\epsilon\left\|x\right\|_{\mathbb{L}_2(Q)};\mathbb{F}_n;\mathbb{L}_2(Q)),
\end{equation*}
see p.\ 84 in \cite{vaart}, and trivially
\begin{equation*}
N(\epsilon;I_n;|\cdot|)\leq \frac{2}{\epsilon}\frac{1}{h},
\end{equation*}
we obtain that
\begin{equation}\
\label{ent1}
N(\epsilon\left\|x\right\|_{\mathbb{L}_2(Q)};\mathbb{F}_n;\mathbb{L}_2(Q))\leq \frac{2}{\epsilon}\frac{1}{h}.
\end{equation}
Define $J(1,\mathbb{F}_n),$ the entropy of the class $\mathbb{F}_n,$ as
\begin{equation*}
J(1,\mathbb{F}_n)=\sup_Q\int_0^1 \{1+\log
(N(\epsilon\left\|x\right\|_{\mathbb{L}_2(Q)};\mathbb{F}_n;\mathbb{L}_2(Q)))
\}^{1/2}d\epsilon,
\end{equation*}
where the supremum is taken over all discrete probability measures
$Q,$ such that $\left\|x\right\|_{\mathbb{L}_2(Q)}>0.$ Since
$\mathbb{F}_n$ is a measurable class of functions with a
measurable envelope (the latter follows from \eqref{envelope}), by
Theorem 2.14.1 in \cite{vaart} we obtain that
\begin{equation*}
\ex\left[\left(\sup_{t\in[-h^{-1},h^{-1}]}|G_n
v_t^i|\right)^2\right]\lesssim
\left\|x\right\|^2_{\mathbb{L}_2(\operatorname{P})}(J(1,\mathbb{F}_n))^2,
\end{equation*}
where the probability $\operatorname{P}$ refers to
$\operatorname{P}_{\gamma,\sigma^2,\rho}.$ Now notice that
\begin{equation*}
\left\|x\right\|^2_{\mathbb{L}_2(\operatorname{P})}=\ex[(\gamma+Y+\sigma
Z )^2]\lesssim \gamma^2+\ex[Y^2]+\sigma^2,
\end{equation*}
where $Y:=\sum_{j=1}^{N(\lambda)}W_j$ denotes the Poisson sum of
i.i.d.\ random variables $W_j$ with density $f,$ while $Z$ is a
standard normal variable. Under conditions of the theorem the term
\begin{equation*}
\ex[Y^2]=\lambda^2 \left(\infint xf(x)dx\right)^2+\lambda\infint
x^2f(x)dx,
\end{equation*}
is bounded uniformly in $\rho.$ Hence
$\left\|x\right\|^2_{\mathbb{L}_2(\operatorname{P})}$ is also
bounded uniformly in $\rho,\sigma$ and $\gamma.$ Using
\eqref{ent1}, the entropy can be further bounded as
\begin{equation*}
J(1,\mathbb{F}_n)\leq \int_0^1\left\{1+\log \left(\frac{2}{\epsilon}\frac{1}{h}\right)\right\}^{1/2}d\epsilon.
\end{equation*}
Here we implicitly assume that $n$ is large enough, so that we take a square root of a positive number. Working out the integral, it is not difficult to check that $J(1,\mathbb{F}_n)=O(h^{-1}).$ Combination of these results yields the statement of the theorem.
\end{proof}

\begin{proof}[Proof of Theorem \ref{thm-gamma}]
Again, the proof is quite similar to that of Theorem
\ref{thm-sigmatilde}. Write
\begin{equation*}
\ex[(\tilde{\gamma}_n-\gamma)^2]=\ex[(\tilde{\gamma}_n-\gamma)^2
1_{B_n}]+\ex[(\tilde{\gamma}_n-\gamma)^2 1_{B_n^c}]=I+II.
\end{equation*}
For $I$ we have
\begin{equation*}
I\lesssim
\left(M_n^2h^2\left(\int_{-1}^1|w(t)|dt\right)^2+\Gamma^2\right)\operatorname{P}(B_n).
\end{equation*}
Thanks to Theorem \ref{phideviation} the right-hand side converges
to zero as $n\rightarrow\infty.$ Moreover, it is negligible
compared to $h^{2\beta+4}.$ Next we turn to $II.$ By Lemma
\ref{trlemma} on the set $B_n^c$ for $n$ large enough truncation in the definition of
$\tilde{\gamma}_n$ becomes unimportant and we have
\begin{align*}
II & =
\ex\left[\left(\int_{-1/h}^{1/h}\Im(\Log(\phi_{emp}(t)))w^h(t)dt-\gamma\right)^21_{B_n^c}\right]\\
& \lesssim
\Lambda^2\ex\left[\left(\int_{-1/h}^{1/h}\Im(\phi_f(t))w^h(t)dt\right)^2 1_{B_n^c}\right]\\
& +
\ex\left[\left(\int_{-1/h}^{1/h}\Im\left(\Log\left(\frac{\phi_{emp}(t)}{\phi_X(t)}\right)\right)w^h(t)dt\right)^21_{B_n^c}\right]\\
& = III + IV.
\end{align*}
The same reasoning as in Theorem \ref{thm-sigmatilde} shows that
here as well $III$ is of order $h^{2\beta+4}.$ As far as $IV$
is concerned, the inequality $|\Im(z)|\leq |z|$ implies that
\begin{equation*}
IV\lesssim
\ex\left[\left(\int_{-1/h}^{1/h}\left|\Log\left(\frac{\phi_{emp}(t)}{\phi_X(t)}\right)\right|w^h(t)dt\right)^21_{B_n^c}.
\right]
\end{equation*}
Now notice that on the set $B_n^c$ the inequality
\begin{equation}
\label{logineq}
\left|\Log\left(\frac{\phi_{emp}(t)}{\phi_X(t)}\right)-\left(\frac{\phi_{emp}(t)}{\phi_X(t)}-1\right)\right|\leq
\left|\frac{\phi_{emp}(t)}{\phi_X(t)}-1\right|^2
\end{equation}
holds, cf.\ formula (4.8) in \cite{gug}. Therefore
\begin{align*}
IV & \lesssim
\ex\left[\left(\int_{-1/h}^{1/h}\left|\frac{\phi_{emp}(t)}{\phi_X(t)}-1\right|w^h(t)dt\right)^21_{B_n^c}\right]\\
& +
\ex\left[\left(\int_{-1/h}^{1/h}\left|\frac{\phi_{emp}(t)}{\phi_X(t)}-1\right|^2w^h(t)dt\right)^21_{B_n^c}\right].
\end{align*}
Just as for $IV$ in the proof of Theorem \ref{thm-sigmatilde},
one can check that in this case as well $IV$ is negligible in
comparison to $III.$ Combination of these results completes the
proof of the theorem.
\end{proof}

\begin{proof}[Proof of Theorem \ref{thm-gam2}]
The proof follows essentially the same steps as the proof of Theorem \ref{thm-gamma}. The only significant difference is that we have to verify that there exists an integer $n_0,$ such that on the set $B_{nh}^c$ for all $n\geq n_0$ truncation in the definition of $\tilde{\gamma}_n$ is unimportant for an arbitrary $\rho$ satisfying conditions of the theorem, and not necessarily for a symmetric $\rho$ as in Lemma \ref{trlemma}. To see this, first notice that
\begin{align*}
\Im(\Log(\phi_{emp}(t)))&=\Im(\Log(e^{\lambda}e^{\sigma^2 t^2/2}\phi_{emp}(t)))\\
\Im(\Log(\phi_{X}(t)))&=\Im(\Log(e^{\lambda}e^{\sigma^2 t^2/2}\phi_{X}(t)))=\Im(e^{\lambda}\phi_f(t)).
\end{align*}
Let $\psi:{\mathbb{R}}\rightarrow {\mathbb{C}},$ where
\begin{equation*}
\psi(t)=\phi_X(t)e^{\lambda}e^{t^2/2}=e^{\lambda\phi_f(t)}.
\end{equation*}
By the Riemann-Lebesgue theorem $\psi(t)$ converges to $1$ as $|t|\rightarrow\infty$ and hence there exists $t^{*}>0,$
such that
\begin{equation}
\label{cut1}
|\psi(t)-1|<\frac{e^{-\lambda}}{2}, \quad |t|>t^{*}.
\end{equation}
Furthermore, we have
\begin{equation}
\label{cut2} |\psi(t)|\geq e^{-\lambda}, \quad t\in {\mathbb{R}}.
\end{equation}
Since $f$ has a finite second moment, by Theorem 1 on p.\ 182 of
\cite{schwartz} the characteristic function $\phi_f$ is
continuously differentiable. Consequently, so is the exponent
$\psi.$ Therefore the path $\psi:[-t^{*},t^{*}]\rightarrow
{\mathbb{C}}$ is rectifiable, i.e.\ has a finite length. In view
of this fact and \eqref{cut2}, $\psi:[-t^{*},t^{*}]\rightarrow
{\mathbb{C}}$ cannot spiral infinitely many times around zero
(because otherwise it would have an infinite length) and for
$|t|>t^{*}$ it cannot make a turn around zero at all because of
\eqref{cut1}. Since $M_n$ diverges to infinity, it follows that
for every $\omega\in B_{nh}^c$ there exists $n_0(\omega),$ such
that $h_{n_0}^{-1}\geq t^{*}$ and for all $n\geq n_0(\omega)$
\begin{equation}
\label{omega1}
\max\{\min\{M_n,\Im(\Log(\phi_{emp}(t)))\},-M_n\}=\Im(\Log(\phi_{emp}(t))).
\end{equation}
However, it is easy to see that in fact there exist a universal integer $n_0,$ such that \eqref{omega1} holds for all $\omega\in B_{nh}^c$: just notice that for each $\omega$ the number of turns that $\phi_{emp}(t)$ makes around zero is determined by the number of turns $m$ that $\psi(t)$ makes around zero and cannot be greater than $2m,$ say. Consequently, there exists a universal bound $4m\pi$ on $\Im(\Log(\phi_{emp}(t)))$ valid for all $\omega\in B_{nh}^c.$ This concludes the proof of the theorem.
\end{proof}

\begin{proof}[Proof of Theorem \ref{thm-f}]
We have
\begin{align*}
\ex\left[\infint |\hat{\rho}_n(x)-\rho(x)|^2dx\right]&=\ex\left[\infint |\hat{\rho}_n(x)-\rho(x)|^2dx1_{B_n}\right]\\
&+\ex\left[\infint |\hat{\rho}_n(x)-\rho(x)|^2dx1_{B_n^c}\right]\\
&=I+II,
\end{align*}
where $B_n$ and $B_n^c$ are defined by \eqref{bndef}. Notice that
\begin{equation*}
\infint |\hat{\rho}_n(x)-\rho(x)|^2dx\lesssim \infint |\hat{\rho}_n(x)|^2dx+\infint |\rho(x)|^2dx.
\end{equation*}
By Parseval's identity and Condition \ref{conditionf}
\begin{equation*}
\infint |\rho(x)|^2dx\lesssim 1.
\end{equation*}
For the Fourier transform of $\hat{\rho}_n$ we have
\begin{equation*}
|\phi_{\hat{\rho}_n}(t)|\lesssim M_n1_{[-h^{-1},h^{-1}]}(t).
\end{equation*}
Hence by Parseval's identity
\begin{equation*}
\infint |\hat{\rho}_n(x)|^2dx\lesssim
M_n^2\frac{1}{h}.
\end{equation*}
Using this and Theorem \ref{phideviation}, we get that
\begin{equation*}
I\lesssim
\left\{M_n^2\frac{1}{h}+1\right\}\frac{e^{\Sigma^2/h^2}}{nh^2}.
\end{equation*}
Under Conditions \ref{conditionh} and \ref{conditionm} the latter is negligible in comparison to $h^{2\beta}.$ Now we turn to $II.$ By Parseval's identity
\begin{align*}
II&=\frac{1}{2\pi}\ex\left[\infint |\phi_{\hat{\rho}_n}(t)-\phi_{\rho}(t)|^2dt1_{B_n^c}\right]\\
&=\frac{1}{2\pi}\ex\left[\int_{-1/h}^{1/h}|\phi_{\hat{\rho}_n}(t)-\phi_{\rho}(t)|^2dt1_{B_n^c}\right]+\frac{1}{2\pi}\int_{\mathbb{R}\setminus(-h^{-1},h^{-1})}
|\phi_{\rho}(t)|^2dt\operatorname{P}(B_n^c)\\
&=III+IV.
\end{align*}
For $IV$ we have
\begin{align*}
IV&\leq \int_{\mathbb{R}\setminus(-h^{-1},h^{-1})}
|\phi_{\rho}(t)|^2dt=\lambda^2\int_{\mathbb{R}\setminus(-h^{-1},h^{-1})}
|t^{2\beta}|\frac{|\phi_{\rho}(t)|^2}{|t^{2\beta}|}dt\\
&\leq \lambda^2h^{2\beta}\infint |t^{2\beta}||\phi_f(t)|^2dt\\
&\leq C\Lambda^2h^{2\beta},
\end{align*}
where the last inequality follows from the definition of the class
$W_{sym}^*(\beta,L,C,\Lambda,K).$ Next we turn to $III.$ With
\eqref{tr1} and \eqref{tr2} we have that
\begin{equation*}
III=\frac{1}{2\pi}\ex\left[\int_{-1/h}^{1/h}|\phi_{\rho_n}(t)-\phi_{\rho}(t)|^2dt 1_{B_n^c}\right]
\end{equation*}
for all $n$ large enough. Consequently,
\begin{align*}
III & \lesssim \ex\left[\left({\tilde{\sigma}_n^2}-{\sigma^2}\right)^2\int_{-1/h}^{1/h}t^4dt1_{B_n^c}\right]\\
& + \ex\left[\int_{-1/h}^{1/h}\left|\Log(\phi_{emp}(t))-\Log(\phi_X(t))\right|^2 1_{B_n^c}\right]\\
& + \ex\left[(\tilde{\gamma}_n-\gamma)^2\int_{-1/h}^{1/h}t^2dt1_{B_n^c}\right]\\
& + \ex\left[(\tilde{\lambda}_n-\lambda)^2\int_{-1/h}^{1/h}dt1_{B_n^c}\right]\\
& =
IV+V+VI+VII.
\end{align*}
For $IV$ we have by Theorem \ref{thm-sigmatilde} that
\begin{equation*}
IV \lesssim \frac{1}{h^5}\ex\left[\left({\tilde{\sigma}_n^2}-{\sigma^2}\right)^21_{B_n^c}\right]=O(h^{2\beta+1}).
\end{equation*}
As far as $V$ is concerned, by the inequality \eqref{logineq}
\begin{equation*}
V\lesssim \ex\left[ \int_{-1/h}^{1/h} \left|\frac{\phi_{emp}(t)}{\phi_X(t)}-1\right|^2dt 1_{B_n^c} \right]+\ex\left[ \int_{-1/h}^{1/h} \left|\frac{\phi_{emp}(t)}{\phi_X(t)}-1\right|^4dt 1_{B_n^c} \right].
\end{equation*}
The right-hand side can be analysed similar to $V$ in the
proof of Theorem \ref{thm-sigmatilde} and in fact it is negligible in
comparison to $h^{2\beta}.$ Furthermore, by Theorem \ref{thm-gamma} $VI$
is of order $h^{2\beta+1}.$ Also $VII$ is of order $h^{2\beta+1}$ by Theorem \ref{thm-lambdatilde}. Combination of all the intermediate
results completes the proof of the theorem.
\end{proof}

\begin{proof}[Proof of Theorem \ref{thm-f2}]
The proof uses the same type of arguments as that of Theorem \ref{thm-f}. The only essential difference is to show that there exists $n_0,$ such that on the set $B_{nh}^c$ for all $n\geq n_0$ we have $\hat{\rho}_n(x)=\rho_n(x).$ We therefore consider in detail only this part of the proof. For $\arg(\phi_{emp}(t))$ the corresponding argument was already given in the proof of Theorem \ref{thm-gam2}. Thus we only have to prove that
\begin{equation*}
\max\{\min\{M_n,\log(|\phi_{emp}(t))\},-M_n\}1_{B_{nh}^c}=\log(|\phi_{emp}(t)|)1_{B_{nh}^c}.
\end{equation*}
The latter can be shown by exactly the same arguments that were used in the proof of \eqref{tr1} in Lemma \ref{trlemma}.
\end{proof}

\begin{proof}[Proof of Theorem \ref{lowbound-thm}]
The proof makes use of some of the ideas found in
\cite{tsyb1,fan1}. Consider two L\'evy triplets
$T_1=(0,\sigma^2,\rho_1)$ and $T_2=(0,\sigma^2,\rho_2),$ where
$\rho_i(x)=\lambda f_i(x),i=1,2$ and $\lambda<\Lambda.$ Let
\begin{equation*}
f_1(x)=\frac{1}{2}(r_1(x)+r_2(x)),
\end{equation*}
where the probability densities $r_1$ and $r_2$ are defined via their characteristic functions,
\begin{equation*}
r_1(x)=\frac{1}{2\pi}\infint
e^{-itx}\frac{1}{(1+t^2/\beta_1^2)^{(\beta_2+1)/2}}dt; \quad
r_2(x)=\frac{1}{2\pi}\infint
e^{-itx}e^{-\alpha_1|t|^{\alpha_2}}dt.
\end{equation*}
With a proper selection of $\beta_1,\beta_2,\alpha_1$ and
$\alpha_2$ one can achieve that $f_1$ satisfies \eqref{fcnd} with
a constant $C/4$ (instead of $C$). We also assume that
$1<\alpha_2<2.$ Notice that $r_1$ is a bilateral gamma density,
while $r_2$ is a stable density. To define $f_2,$ we perturb $f_1$
as follows:
\begin{equation*}
f_2(x)=f_1(x)+\delta_n^{\beta-1/2}H(x/\delta_n),
\end{equation*}
where $\delta_n\rightarrow 0$ as $n\rightarrow\infty,$ and the
function $H$ satisfies the following conditions:
\begin{enumerate}
\item $\infint |t|^{2\beta}|\phi_H(t)|^2dt\leq C/4;$
\item $\infint H(x)dx=0;$
\item $\int_{-\infty}^0 H(x)dx\neq 0;$
\item $\phi_H(t)=0$ for $t$ outside $[1,2];$
\item $\phi_H(t)$ is twice continuously differentiable.
\end{enumerate}
To see why such a function exists, see e.g.\ p.~1268 in
\cite{fan1}. It is also obvious, that there are many functions $H$
with an appropriate tail behaviour, such that $f_2(x)\geq 0$ for
all $x\in\mathbb{R},$ at least for small enough $\delta_n.$ With
such an $H$ and small enough $\delta_n,$ the function $f_2$ will
be a probability density satisfying \eqref{fcnd}. Notice that
\begin{equation}
\label{distance} \infint (\rho_2(x)-\rho_1(x))^2dx\asymp
\delta_n^{2\beta}.
\end{equation}
Here the symbol $\asymp$ means `asymptotically of the same order'.
Denote by $q_i$ a density of a random variable $X$ corresponding
to a triplet $T_i, i=1,2.$ The statement of the theorem will
follow from \eqref{distance} and Lemma 8 of \cite{tsyb1}, if we
prove that the $\chi^2$-divergence (see p.\ 72 in \cite{tsyb} for
a definition) between $q_2$ and $q_1$ satisfies
\begin{equation}
\label{chi-square} n\chi^2(q_2,q_1)=n\infint
\frac{(q_2(x)-q_1(x))^2}{q_1(x)}dx\leq c,
\end{equation}
where a positive constant $c<1$ is independent of $n.$

Let $g_i$ be a density of a Poisson sum $Y$ conditional on the
fact that its number of summands $N(\lambda)>0.$ Here the index
$i$ refers to a triplet $T_i,i=1,2.$ Since
\begin{equation}
\label{ychf}
\phi_Y(t)=e^{-\lambda}+(1-e^{-\lambda})\frac{1}{e^{\lambda}-1}\left(e^{\lambda\phi_{f_i}(t)}-1\right),
\end{equation}
it follows that
\begin{equation*}
\phi_{g_i}(t)=\frac{1}{e^{\lambda}-1}\left(e^{\lambda\phi_{f_i}(t)}-1\right).
\end{equation*}
We also have
\begin{equation}\label{g-expr}
g_i(x)=\sum_{n=1}^{\infty}f_i^{\ast
n}(x)P(N(\lambda)=n|N(\lambda)>0).
\end{equation}
From \eqref{ychf} we obtain
\begin{equation*}
q_1(x)\geq (1-e^{-\lambda})\phi_{0,\sigma^2}\ast g_1(x),
\end{equation*}
where $\phi_{0,\sigma^2}$ denotes a normal density with mean zero
and variance $\sigma^2.$ Moreover, by Lemma 2 of \cite{matias},
there exists a large enough constant $A,$ such that the right-hand
side of the above display is not less than
$(1-e^{-\lambda})g_1(|x|+A).$ Hence
\begin{equation*}
n\chi^2(q_2,q_1)\lesssim n\infint
\frac{(q_2(x)-q_1(x))^2}{g_1(|x|+A)}dx\lesssim n\infint
\frac{(q_2(x)-q_1(x))^2}{f_1(|x|+A)}dx,
\end{equation*}
where the last inequality follows from \eqref{g-expr}. Splitting
the integration region into two parts, we then get that
\begin{align*}
n\chi^2(q_2,q_1)&\lesssim n\int_{|x|\leq A} {(q_2(x)-q_1(x))^2}dx+n\int_{|x|>A} x^4{(q_2(x)-q_1(x))^2}dx\\
&=I+II.
\end{align*}
Here we used the fact that $f_1(x)$ behaves as $|x|^{-1-\alpha_2}$
at plus and minus infinity, see e.g.\ formula (14.37) in
\cite{sato}, and that $1<\alpha_2<2.$ Since
\begin{equation*}
\delta_n^{\beta-1/2}\infint
e^{itx}H(x/\delta_n)dx=\delta_n^{\beta+1/2}\phi_H(\delta_n t),
\end{equation*}
by Parseval's identity it holds that
\begin{align*}
I&\leq n\frac{1}{2\pi}\infint
|\phi_{q_2}(t)-\phi_{q_1}(t)|^2dt\\
&=n\frac{(1-e^{-\lambda})^2}{2\pi}\infint
|\phi_{g_2}(t)-\phi_{g_1}(t)|^2e^{-\sigma^2t^2}dt\\
&=n\frac{(1-e^{-\lambda})^2}{(e^{\lambda}-1)^2}\frac{1}{2\pi}\infint
|e^{\lambda\phi_{f_2}(t)}-e^{\lambda\phi_{f_1}(t)}|^2e^{-\sigma^2t^2}dt\\
& \lesssim n\infint
|\phi_{f_2}(t)-\phi_{f_1}(t)|^2e^{-\sigma^2t^2}dt,
\end{align*}
where the last inequality follows from the mean-value theorem
applied to the function $e^x$ and the fact that
$|\lambda\phi_{f_i}(t)|\leq\lambda.$ By definition of $f_1$ and
$f_2$ we then get that
\begin{align*}
I&\lesssim n\delta_n^{2\beta+1}\infint |\phi_H(\delta_n
t)|^2e^{-\sigma^2t^2}dt\\
&=n\delta_n^{2\beta}\infint
|\phi_H(s)|^2e^{-\sigma^2s^2/\delta_n^2}ds\\
&=O\left(n\delta_n^{2\beta}e^{-\sigma^2/\delta_n^2}\right).
\end{align*}
The choice $\delta_n\asymp (\log n)^{-1/2}$ with small enough
constant will now imply that $I\rightarrow 0$ as
$n\rightarrow\infty.$ Next we turn to $II.$ By Parseval's identity
\begin{equation*}
II\leq n\frac{1}{2\pi}\infint
|(\phi_{q_2}(t)-\phi_{q_1}(t))''|^2dt.
\end{equation*}
Here we use the fact that even though $\phi_{f_1}$ and
$\phi_{f_2}$ are not twice differentiable at zero, the difference
$\phi_{q_2}(t)-\phi_{q_1}(t)$ still is, because $\phi_H$ is
identically zero outside the interval $[1,2].$ By exactly the same
type of arguments as we used for $I,$ one can show that
$II\rightarrow 0$ as $n\rightarrow\infty,$ provided
$\delta_n\asymp (\log n)^{-1/2}.$ Hence \eqref{chi-square} is
satisfied and the statement of the theorem follows.
\end{proof}

\begin{proof}[Proof of Theorem \ref{thm-lbnd}]
The proof is similar to the proof of Theorem \ref{lowbound-thm}.
Let $\rho_1(x)=\lambda f_1(x)$ with $f_1$ as in the proof of
Theorem \ref{lowbound-thm}. Consider a perturbation of $\rho_1,$
say $\rho_2(x)=\lambda f_2(x),$ where $f_2$ is defined as in
Theorem 4.1. Assume that the function $H$ in the definition of
$f_2$ has a compact support on $[-1,1]$ and that it satisfies
Conditions 1--3 in the proof of Theorem \ref{lowbound-thm}. This
implies that $f_2(x)\geq 0$ for $\delta_n$ small enough. Therefore
$\rho_2$ is a L\'evy density satisfying \eqref{fcnd}, provided
$\delta_n$ is small enough. Denote by $\mathbb{P}_{1n}$ and
$\mathbb{P}_{2n}$ the laws of a L\'evy process $X=(X)_{t\geq 0}$
restricted to the time interval $[0,n]$ and corresponding to the
characteristic triplets $T_1=(0,0,\rho_1)$ and $T_2=(0,0,\rho_2),$
respectively. Notice that
\begin{equation}
\label{cineq}
\inf_{\widetilde{\rho}_n}\sup_{\mathcal{T}}\ex\left[\infint
(\widetilde{\rho}_n(x)-\rho(x))^2dx\right]\geq
\inf_{{\rho}_n}\sup_{\mathcal{T}}\ex\left[\infint({\rho}_n(x)-\rho(x))^2dx\right],
\end{equation}
where ${\rho}_n$ denotes an arbitrary estimator based on a
continuous record of observations of $X$ over $[0,n].$ Let
$K(P,Q)$ denote the Kullback-Leibler divergence between the
probability measures $P$ and $Q,$
\begin{equation*}
K(P,Q)=
\begin{cases}
\int\log\frac{dP}{dQ}dP & \text{if $P\ll Q$,}\\
+\infty & \text{if otherwise},
\end{cases}
\end{equation*}
see Definition 2.5 in \cite{tsyb}. In view of \eqref{distance},
the result will follow from formula \eqref{cineq} above, the
arguments of Section 2.2 of \cite{tsyb} combined with Theorem 2.2
(iii) of \cite{tsyb}, provided the Kullback-Leibler divergence
$K(\mathbb{P}_{2n},\mathbb{P}_{1n})$ between the measures
$\mathbb{P}_{2n}$ and $\mathbb{P}_{1n}$ remains bounded for all
$n$ by a constant independent of $n.$ The Kullback-Leibler
divergence between $\mathbb{P}_{2n}$ and $\mathbb{P}_{1n}$ can be
easily computed via Theorem A.1 of \cite{cont}, which in our case
gives that $K(\mathbb{P}_{2n},\mathbb{P}_{1n})=n
K(\rho_2,\rho_1),$ because both $\rho_1$ and $\rho_2$ have the
same total mass. Let $\chi^2(\rho_2,\rho_1)$ denote the
$\chi^2$-divergence between the densities $\rho_2$ and $\rho_1.$
It is not difficult to see that
$K(\rho_2,\rho_1)\leq\chi^2(\rho_2,\rho_1),$ cf.\ formula (2.20)
in \cite{tsyb}. It follows that in order to prove the theorem, it
suffices to show that $\chi^2({\rho}_2,{\rho}_1)=O(n^{-1}).$ By
definition of $\rho_1,\rho_2,H$ and a change of the integration
variable we have that
\begin{equation}
\label{chidist} \chi^2(\rho_2,\rho_1)\lesssim
\delta_n^{2\beta+1}\int_{-1}^{1}\frac{(H(u))^2}{f_1(\delta_n u
)}du.
\end{equation}
The dominated
convergence theorem implies that the right-hand side of the above
equation is of order $\delta_n^{2\beta+1}.$ Taking $\delta_n\asymp
n^{-1/(2\beta+1)}$ gives that \eqref{chidist} is of order
$n^{-1}.$ This yields the statement of the theorem.
\end{proof}
\medskip

\noindent {\bf Acknowledgments.} The author would like to thank
Bert van Es and Peter Spreij for discussions on various parts of
the draft version of the paper. Part of the research was done
while the author was at Korteweg-de Vries Institute for
Mathematics in Amsterdam. The research at Korteweg-de Vries
Institute for Mathematics was financially supported by the
Nederlandse Organisatie voor Wetenschappelijk Onderzoek (NWO).

\end{document}